\numberwithin{equation}{section}
\theoremstyle{plain}
\newtheorem{theorem}{Theorem}[section]
\newtheorem{lemma}[theorem]{Lemma}
\newtheorem{corollary}[theorem]{Corollary}
\newtheorem{proposition}[theorem]{Proposition}
\theoremstyle{definition}
\newtheorem{remark}[theorem]{Remark}
\newtheorem{example}[theorem]{Example}
\begin{document}

\title[On similarity to contractions of class $C_{\cdot 0}$ with  finite defects]{On similarity to  contractions of class $C_{\cdot 0}$  with  finite defects}

\author{Maria F. Gamal'}
\address{
 St. Petersburg Branch\\ V. A. Steklov Institute 
of Mathematics\\
Russian Academy of Sciences\\ Fontanka 27, St. Petersburg\\ 
191023, Russia  
}
\email{gamal@pdmi.ras.ru}

\keywords{Similarity,  contractions of class  $C_{\cdot 0}$, contractions with  finite defects, shift-type invariant subspaces.}


\begin{abstract}A criterion on the similarity of a (bounded, linear) operator $T$ 
on a (complex, separable) Hilbert space $\mathcal H$  to a contraction of 
class $C_{\cdot 0}$ with finite unequal defects is given in terms of 
shift-type invariant subspaces of $T$.  Namely, $T$ 
is similar to such a contraction if and only if there exists
a finite collection  of 
(closed) invariant subspaces $\mathcal M$ of $T$ 
 such that the restriction $T|_{\mathcal M}$ of $T$ on $\mathcal M$ is 
similar to the simple unilateral shift and 
 the linear span of these subspaces $\mathcal M$ is $\mathcal H$. 
 A sufficient condition for the similarity of an absolutely 
continuous polynomially bounded operator $T$ to a contraction of class 
$C_{\cdot 0}$ with finite equal defects is given. Namely, $T$ is 
similar to such a contraction if the (spectral) multiplicity of 
$T$ is finite and $B(T)=\mathbb O$, where $B$ is a finite 
product of Blaschke products with simple zeros satisfying the Carleson 
interpolating condition (a Carleson--Newman product).

2020 \emph{Mathematics Subject Classification}. 47A45, 47A15.

 \end{abstract}

\maketitle

\section{Introduction}

 Let $\mathcal H$ be a (complex, separable) Hilbert space, 
and let  $\mathcal L(\mathcal H)$ be the algebra of all (bounded, linear)  operators acting on  $\mathcal H$. 
A (closed) subspace  $\mathcal M$ of  $\mathcal H$ is called \emph{invariant} 
for an operator $T\in\mathcal L(\mathcal H)$, if $T\mathcal M\subset\mathcal M$. 
 Denote by  $\operatorname{Lat}T$ the collection of all invariant subspaces of $T$. 
It is well known and easy to see that $\operatorname{Lat}T$ is a complete lattice 
with the inclusion as a partial order.  

The \emph{multiplicity} $\mu_T$ of an operator  $T\in \mathcal L(\mathcal H)$ 
 is the minimum dimension of its reproducing subspaces: 
\begin{equation}\label{mu}  \mu_T=\min\{\dim E: E\subset \mathcal H, \ \ 
\vee_{n=0}^\infty T^n E=\mathcal H \}.\end{equation}

For Hilbert spaces  $\mathcal H$ and $\mathcal K$, let    $\mathcal L(\mathcal H, \mathcal K)$ denote the space of (bounded, linear) 
transformations acting from $\mathcal H$ to $\mathcal K$.
For $T\in\mathcal L(\mathcal H)$ and  $R\in\mathcal L(\mathcal K)$ 
set
\begin{equation*}
\mathcal I(T,R)=\{X\in\mathcal L(\mathcal H, \mathcal K)\ :\ XT=RX\}.
\end{equation*} 
Then $\mathcal I(T,R)$ is the set of all transformations $X$ which  \emph{intertwine} $T$ and $R$.
Let $X\in\mathcal I(T,R)$.  
If $X$ is unitary, then $T$ and $R$ 
are called  \emph{unitarily equivalent}, in notation: $T\cong R$. If $X$ is invertible, that is, 
$X^{-1}\in\mathcal L(\mathcal K, \mathcal H)$, 
then $T$ and $R$ are called \emph{similar}, in notation: $T\approx R$. 
If there exists  $X\in\mathcal I(T,R)$  such that  $\ker X=\{0\}$ and $\operatorname{clos}X\mathcal H=\mathcal K$, then
$T$ is called a  \emph{quasiaffine transform} of $R$, in notation: $T\prec R$. 
If $T\prec R$ and $R\prec T$, then $T$ and $R$ are called  \emph{quasisimilar}, in notation: $T\sim R$.

An  operator $T$ is called  \emph{polynomially bounded}, if 
there exists a constant $C>0$ such that $\|p(T)\|\leq C\|p\|_\infty$ for every (analytic) polynomial $p$,
 where $\|p\|_\infty=\max_{|z|\leq 1}|p(z)|$. 
 $T$ is an \emph{absolutely continuous (a.c.)}  polynomially bounded operator, 
if and only if the $H^\infty$-functional calculus is well defined for $T$, see \cite{mlak} or \cite{ker16}. 
(The definition of  an a.c.  polynomially bounded operator is not recalled here.) 
An a.c.  polynomially bounded operator $T$ is called an \emph{operator of class $C_0$}, 
if there exists $0\not\equiv\varphi\in H^\infty$ such that $\varphi(T)=\mathbb O$. 
For every operator $T$ of class $C_0$ there exists a \emph{minimal function}, that is, an inner function $\theta$
 such that $\theta(T)=\mathbb O$ and if $\varphi\in H^\infty$ is such that $\varphi(T)=\mathbb O$,
 then $\varphi\in\theta H^\infty$, see \cite{bercpr} and {\cite[Proposition III.4.4]{sznfbk}} or {\cite[Sec. II.1]{berc}}. 
 
 An  operator $T$  is called a  \emph{contraction}, if $\|T\|\leq 1$. 
Every contraction $T$  is polynomially bounded with $C=1$ 
 by the von Neumann inequality (see, for example, {\cite[Proposition I.8.3]{sznfbk}}). 
If an operator of class $C_0$ is a contraction, then it is called a \emph{contraction of class $C_0$}, 
see \cite{berc} and \cite{sznfbk}.

A contraction  $T\in\mathcal L(\mathcal H)$  is called  \emph{of class} $C_{\cdot 0}$, if 
$\|(T^*)^nx\|\to 0$ for every $x\in\mathcal H$. If $T$ is a contraction of class $C_0$, 
then $T$ and $T^*$ are of class  $C_{\cdot 0}$ ({\cite[Proposition III.4.2]{sznfbk}}). 
The \emph{defect indices} (the \emph{defects} for brevity) of $T$ are $d_T=\dim(I-T^*T)\mathcal H$ and $d_{T^*}=\dim(I-TT^*)\mathcal H$, 
see  {\cite[Sec. I.3.1]{sznfbk}}.  If $T$ is a contraction of class $C_{\cdot 0}$, then 
$d_T\leq d_{T^*}$. Furthermore,  $T$ is unitarily equivalent to a compression 
on the orthogonal complement of an invariant subspace of 
the unilateral shift of multiplicity $d_{T^*}$. Consequently, $\mu_T\leq d_{T^*}$
({\cite[Corollary X.3.3]{sznfbk}}).  If $T$ is a contraction of class $C_{\cdot 0}$ and $N:=d_{T^*}<\infty$, 
then $T$ is of class $C_0$ if and only if $d_T=d_{T^*}$, see \cite{sznf74}. Contractions  $T$ of class $C_0$ with 
$d_{T^*}=N<\infty$ are called \emph{of class} $C_0(N)$. 
If $T$ is a contraction of class $C_{\cdot 0}$ and $d_T<d_{T^*}$, then $T$ has shift-type invariant subspaces, 
that is, there exist $\mathcal M\in\operatorname{Lat}T$ such that $T|_{\mathcal M}\approx S$, 
and the closed linear span of  such subspaces  is  the space $\mathcal H$  on which $T$ acts, see \cite{sznf74} and \cite{kerchy} 
or   {\cite[Sec. IX.3]{sznfbk}}.  
In \cite{gamal22}, it is proved that the minimal quantity of shift-type invariant subspaces of $T$ 
whose \emph{closed linear} span is  $\mathcal H$ is equal to 
$\mu_T$, if $2\leq\mu_T$, and is equal to $2$, if $\mu_T=1$. In this paper, it is proved that an operator $T$ 
is similar to a contraction $R$ of class  $C_{\cdot 0}$ with $d_R<d_{R^*}<\infty$  if and only if 
 the minimal quantity of shift-type invariant subspaces of $T$ 
whose \emph{linear} span is  $\mathcal H$ is finite. For  operators of class $C_0$ 
analogous results (with replacing shift-type invariant subspace by appropriate analogous subspaces) cannot hold true in general,  because operators of class $C_0$  can be unicellular.
 Remember that an operator $T$ is called \emph{unicellular}, if $\operatorname{Lat}T$ is totally ordered by inclusion. 
On the other side, if $B(T)=\mathbb O$ for a Carleson--Newman product $B$ and $\mu_T<\infty$, then 
$T$ is similar to a contraction of class $C_0(N)$ (with $N\in\mathbb N$). Similarity to  contractions of class $C_0(1)$, that is, Jordan blocks, is considered in \cite{clouatre11},  \cite{clouatre14},  \cite{clouatre15}. 

The paper is organized as follows. In Sec. 2 the criterion on similarity to contractions  $R$ of class  $C_{\cdot 0}$ with $d_R<d_{R^*}<\infty$ 
is proved. In Sec. 3 an example of a unicellular contraction of class $C_0(2)$ is given which is not similar to a contraction of class $C_0(1)$. 
In Sec. 4 operators $T$ of class $C_0$ such that $B(T)=\mathbb O$ for  a Carleson--Newman product $B$ are considered. 

 The following notation will be used. For a (closed) subspace $\mathcal M$ of a Hilbert space $\mathcal H$, by 
$P_{\mathcal M}$ and $I_{\mathcal M}$ the orthogonal projection from $\mathcal H$ onto $\mathcal M$ and 
 the identity operator on $\mathcal M$ are denoted,  respectively. 
By $\mathbb O$ the zero transformation acting between (maybe nonzero) spaces is denoted.

Symbols $\mathbb D$ and $\mathbb T$ denote the open unit disc
and the unit circle, respectively. The normalized Lebesgue measure on $\mathbb T$ is denoted by $m$. 
$H^2$ is  the Hardy--Hilbert space on $\mathbb T$. $H^\infty$ is the  
 Banach algebra of all bounded analytic functions  in $\mathbb D$; 
 $\|\varphi\|_\infty=\sup_{|z|< 1}|\varphi(z)|$ for $\varphi\in H^\infty$. 
The simple unilateral shift  $S$ is the operator  of multiplication by the independent variable    on $H^2$. 
For $N\in\mathbb N$ denote by  $H^2_N$ and by $S_N$   the orthogonal sum of $N$ copies of $H^2$ and $S_N$,  respectively. 
Then $S_N$ is the unilateral shift of multiplicity $N$ acting on $H^2_N$: $S_N\in\mathcal L(H^2_N)$ and $\mu_{S_N}=N$.  
For $N\in\mathbb N$, vectors from $H^2_N$ are columns of the height $N$ of functions from $H^2$. 

Let $M,N\in\mathbb N$. 
The space $H^\infty(\mathbb C^M,\mathbb C^N)$ is the space of bounded analytic functions on $\mathbb D$ 
whose values are (bounded, linear) operators acting from $\mathbb  C^M$ to $\mathbb C^N$. 
Let $\Phi\in H^\infty(\mathbb C^M,\mathbb C^N)$. Then $\Phi$ has nontangential boundary values 
$\Phi(\zeta)$ for a.e. $\zeta\in\mathbb T$ with respect to  $m$. 
Furthermore, $\Phi$ has 
a representation $\Phi=[\varphi_{nk}]_{n=1,\ldots, N\atop k=1, \ldots,M}$, where $\varphi_{nk}\in H^\infty$,
with respect to some  orthonormal bases of $\mathbb C^M$ and $\mathbb C^N$. The operator of multiplication by $\Phi$ 
acts from $H^2_M$ to $H^2_N$. For $\Phi\in H^\infty(\mathbb C^M,\mathbb C^N)$ set 
$\widetilde\Phi(z)=\Phi^*(\overline z)$, $z\in\mathbb D$. 
The function  $\Theta\in H^\infty(\mathbb C^M,\mathbb C^N)$ is called inner, if 
the multiplication by $\Theta$ is an isometry from $H^2_M$ to $H^2_N$. The function $\Theta$ is called $*$-inner, 
if $\widetilde\Theta$ is inner, and is called  inner from both sides, if $\Theta$ is inner and $*$-inner. The definition of 
outer and $*$-outer function is not recalled here. For references, see {\cite[Sec. V.2.3]{sznfbk}}

Let $\Theta\in H^\infty(\mathbb C^M,\mathbb C^N)$. 
Then $\Theta$ is inner if and only if $\Theta^*(\zeta)\Theta(\zeta)=I_{\mathbb C^M}$ for a.e. $\zeta\in\mathbb T$ 
with respect to  $m$, and then $M\leq N$ ({\cite[Proposition V.2.2]{sznfbk}}). For inner function 
 $\Theta\in H^\infty(\mathbb C^M,\mathbb C^N)$ set 
\begin{equation*} \mathcal H(\Theta)=H^2_N\ominus\Theta H^2_M \ \ \text{ and }\ \ 
  T_\Theta = P_{\mathcal H(\Theta)} S_N|_{\mathcal H(\Theta)}.
\end{equation*}
Then $T_\Theta$ is a contraction 
of class $C_{\cdot 0}$ with $d_{T^*}\leq N$. Conversely, for every contraction $T$ of class $C_{\cdot 0}$ with $d_{T^*}<\infty$ 
there exists an inner function $\Theta\in H^\infty(\mathbb C^{d_T},\mathbb C^{d_{T^*}})$ such that $T\cong T_\Theta$. 
(Such $\Theta$ is called the characteristic function of  a contraction $T$.) For references, see  {\cite[Ch. V, VI]{sznfbk}}, or 
{\cite[Sec. V.1]{berc}}, or {\cite[Ch. C.1]{nik02}}.

\section{On similarity to contractions of class $C_{\cdot 0}$ with unequal finite defects}

In this section, a criterion on the similarity of an  operator $T$  to a contraction of 
class $C_{\cdot 0}$ with finite unequal defects (Theorem \ref{thmmain}) will be proved.

\begin{lemma}\label{lemtau} Let $N\in\mathbb N$, $N\geq 2$, and let $\tau_{0n}\subset\mathbb T$, $n=1,\ldots, N$, be  
measurable sets such that $m(\tau_{0n})>0$ for every $n$, $1\leq n \leq N$. Then there exist measurable sets 
$\sigma_n$, $n=1,\ldots, N$, such that $\sigma_n\subset\tau_{0n}$, $m(\sigma_n)>0$, $\sigma_n\cap\sigma_k=\emptyset$, if $n\neq k$, 
$n,k=1, \ldots, N$, and  $m(\cup_{n=1}^N\sigma_n)=m(\cup_{n=1}^N\tau_{0n})$. 
\end{lemma}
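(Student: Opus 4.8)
The plan is to build the sets $\sigma_n$ one at a time, peeling off from each $\tau_{0n}$ a piece that is disjoint from everything chosen so far, while making sure that collectively the $\sigma_n$ exhaust the union $\tau_0:=\cup_{n=1}^N\tau_{0n}$ up to measure zero. The natural device is the following: partition $\tau_0$ according to which of the $\tau_{0n}$ contain a given point. For a nonempty subset $A\subset\{1,\ldots,N\}$ put
\[
E_A=\Bigl(\bigcap_{n\in A}\tau_{0n}\Bigr)\setminus\Bigl(\bigcup_{n\notin A}\tau_{0n}\Bigr).
\]
The sets $E_A$ are pairwise disjoint, measurable, and their union over all nonempty $A$ is exactly $\tau_0$. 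Moreover $x\in E_A$ precisely when $A=\{n:x\in\tau_{0n}\}$.

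The key step is to assign each $E_A$ of positive measure to one of the indices in $A$ in such a way that every index $n$ with $m(\tau_{0n})>0$ receives at least one $E_A$ of positive measure. This is a finite combinatorial problem: the bipartite "containment" relation $n\in A$ between indices $n$ and nonempty subsets $A$ with $m(E_A)>0$ satisfies Hall's condition, because for any set $J$ of indices the family $\{E_A : A\cap J\neq\emptyset,\ m(E_A)>0\}$ covers $\cup_{n\in J}\tau_{0n}$ (up to null sets), which has positive measure since each $\tau_{0n}$ does; hence there are at least $|J|$ such sets $A$. By Hall's marriage theorem there is an injection $n\mapsto A(n)$ with $n\in A(n)$ and $m(E_{A(n)})>0$ for every $n$. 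Now set, for each $n$,
\[
\sigma_n=E_{A(n)}\cup\bigcup_{\substack{A\,:\,n\in A,\ m(E_A)>0\\ A\neq A(k)\text{ for all }k}}E_A,
\]
i.e. $\sigma_n$ consists of its "guaranteed" block $E_{A(n)}$ together with a share of the leftover blocks; distribute each leftover block $E_A$ (with $m(E_A)>0$ and not of the form $A(k)$) entirely to one chosen index in $A$, say the smallest. Then $\sigma_n\subset\bigcup_{A\ni n}E_A\subset\tau_{0n}$, the $\sigma_n$ are pairwise disjoint because the $E_A$ are and each positive-measure $E_A$ is assigned to exactly one index, $m(\sigma_n)\ge m(E_{A(n)})>0$, and $\cup_{n=1}^N\sigma_n$ equals the union of all positive-measure $E_A$, which differs from $\tau_0$ by a null set, so $m(\cup_n\sigma_n)=m(\cup_n\tau_{0n})$.

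The only real obstacle is verifying Hall's condition, and that reduces to the observation above: if $m(\cup_{n\in J}\tau_{0n})>0$ then among the blocks $E_A$ with $A\cap J\neq\emptyset$ at least one has positive measure, and more sharply the number of such blocks is at least $|J|$ — this last point needs the slightly finer argument that the blocks $E_A$ with $A=\{n\}$, $n\in J$, together with a covering argument, force enough distinct $A$'s; alternatively one can bypass the sharp count by applying Hall's theorem to the relation restricted to those $n$ with $m(\tau_{0n}\setminus\bigcup_{A(k)\text{ chosen}}E_{A(k)})>0$ in an inductive fashion. Everything else — measurability of the $E_A$, disjointness, the additivity computation $m(\cup_n\sigma_n)=\sum_{A:m(E_A)>0}m(E_A)=m(\tau_0)$ — is routine finite bookkeeping.
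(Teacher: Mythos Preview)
Your Hall's-theorem step fails as stated. Take $N=2$ with $\tau_{01}=\tau_{02}$: then the only atom with positive measure is $E_{\{1,2\}}$, so for $J=\{1,2\}$ there is exactly one positive-measure $A$ meeting $J$, not two, and no injection $n\mapsto A(n)$ exists. More generally, whenever several of the $\tau_{0n}$ coincide (or agree up to null sets) the atom decomposition has too few positive-measure blocks for Hall's condition to hold. Your suggested repairs (``the blocks $E_{\{n\}}$ force enough distinct $A$'s'', or an inductive restriction involving the not-yet-defined $A(k)$) do not address this; the singletons $E_{\{n\}}$ can all be null.

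The atom decomposition is nonetheless the right idea, and the fix is simpler than Hall: use that $m$ is nonatomic. For each nonempty $A$ with $m(E_A)>0$, split $E_A$ into $|A|$ disjoint measurable pieces $E_A^{(n)}$, $n\in A$, each of positive measure, and set $\sigma_n=\bigcup_{A\ni n} E_A^{(n)}$. Then $\sigma_n\subset\tau_{0n}$, the $\sigma_n$ are pairwise disjoint, and $\bigcup_n\sigma_n$ equals the union of all positive-measure atoms, hence $\tau_0$ up to a null set; positivity of $m(\sigma_n)$ follows since $\tau_{0n}=\bigcup_{A\ni n}E_A$ has positive measure, so some $A\ni n$ has $m(E_A)>0$. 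This is a genuinely different and arguably cleaner route than the paper's, which proceeds by induction on $N$: it removes the set of smallest measure from the others, applies the inductive hypothesis to the survivors, and handles separately those $\tau_{0n}$ that essentially sit inside the removed set by chopping up the intersection.
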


\begin{proof} The lemma will be proved using induction. Let $N=2$. Without loss of generality,
 we may assume that $m(\tau_{01})\leq m(\tau_{02})$. Set $\tau_2=\tau_{02}\setminus\tau_{01}$. If $m(\tau_2)>0$, then 
set $\sigma_1=\tau_{01}$ and $\sigma_2=\tau_2$. If  $m(\tau_2)=0$, then 
$m(\tau_{01}\cup\tau_{02})=m(\tau_{01}\cap\tau_{02})$. Every representation $\tau_{01}\cap\tau_{02}=\sigma_1\cup\sigma_2$ with 
$\sigma_1$ and $\sigma_2$ such that $m(\sigma_k)>0$, $k=1,2$, and $\sigma_1\cap\sigma_2=\emptyset$
satisfies the conclusion of the lemma in this case.

If $N>2$, assume that the lemma is proved for all $M$, $2\leq M\leq N-1$. 
Without loss of generality,  we may assume that $m(\tau_{0N})=\min_{1\leq n\leq N} m(\tau_{0n})$. 
Set $\tau_n=\tau_{0n}\setminus\tau_{0N}$, $n=1,\ldots, N-1$. Without loss of generality,  we may assume that 
there exists $1\leq M\leq N$  such that $m(\tau_n)>0$, $1\leq n\leq M-1$, and $m(\tau_n)=0$, $M\leq n\leq N-1$. 
Thus, $m(\cup_{n=1}^N\tau_{0n})=m(\cup_{n=1}^{M-1}\tau_n\cup\tau_{0N})$ and $(\cup_{n=1}^{M-1}\tau_n)\cap\tau_{0N}=\emptyset$. 
By the inductive hypothesis, there exist  measurable sets 
$\sigma_n$, $n=1,\ldots, M-1$, such that $\sigma_n\subset\tau_n$, $m(\sigma_n)>0$, $\sigma_n\cap\sigma_k=\emptyset$, if $n\neq k$, 
$n,k=1, \ldots, M-1$, and  $m(\cup_{n=1}^{M-1}\sigma_n)=m(\cup_{n=1}^{M-1}\tau_n)$. 
Furthermore, $\tau_{0N}=\cup_{n=M}^{N-1}(\tau_{0N}\setminus\tau_{0n})\cup\cap_{n=M}^N\tau_{0n}$. The relations 
$m(\tau_{0N})\leq m(\tau_{0n})$ and $m(\tau_n)=0$, $M\leq n\leq N-1$, imply $m(\tau_{0N}\setminus\tau_{0n})=0$. Consequently, 
$m(\tau_{0N})=m(\cap_{n=M}^N\tau_{0n})$. Take a representation $\cap_{n=M}^N\tau_{0n}=\cup_{n=M}^N \sigma_n$ with 
$\sigma_n$ such that $m(\sigma_n)>0$, and $\sigma_n\cap\sigma_k=\emptyset$, if $n\neq k$, $n,k=M,\ldots,N$. Then $\sigma_n$, 
$n=1, \ldots, N$, satisfy the conclusion of the lemma.
\end{proof}

  \begin{lemma} \label{lem1} Let $N\in\mathbb N$, $N\geq 2$, 
 and let $c>0$. Let $\varphi_n$, $n=1,\ldots, N$, be such that $0\not\equiv\varphi_n\in H^\infty$,  
$\|\varphi_n\|_\infty\leq 1$
 for every $n$, $1\leq n \leq N$, and 
\begin{equation*} \mathop{\mathrm{ess \, inf}}_{\zeta\in\mathbb T}\sum_{n=1}^N|\varphi_n(\zeta)|^2=c^2>0.
\end{equation*}
Then there exist $\Psi\in H^\infty(\mathbb C^N, \mathbb C^N)$ and inner functions $\vartheta_n$, $n=1,\ldots, N$, 
such that $\Psi$ is invertible in $H^\infty(\mathbb C^N, \mathbb C^N)$ and 
\begin{equation*} \Psi[\varphi_n]_{n=1}^N=[\vartheta_n]_{n=1}^N.
\end{equation*}
\end{lemma}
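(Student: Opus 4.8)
The plan is to reduce to a normalized situation, use Lemma~\ref{lemtau} to split the circle into disjoint sets on which the individual $\varphi_n$ are bounded below, and then build $\Psi$ and the $\vartheta_n$ by hand from outer functions adapted to that splitting.

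First I would normalize. Let $g$ be the outer function with $|g|^2=\sum_{n=1}^N|\varphi_n|^2$ on $\mathbb T$; since $c^2\le\sum_n|\varphi_n|^2\le N$, $\log|g|$ is bounded, so $g$ is well defined, $c\le|g(z)|\le\sqrt N$ on $\mathbb D$, and $g$ is invertible in $H^\infty$. Replacing $\varphi_n$ by $\varphi_n/g$ — which still lies in $H^\infty$, has $\|\cdot\|_\infty\le1$, is $\not\equiv0$, and now satisfies $\sum_n|\varphi_n/g|^2\equiv1$ a.e. — and absorbing the scalar factor $g$ back into $\Psi$ at the end (note $g^{-1}\Psi\in H^\infty(\mathbb C^N,\mathbb C^N)$ with inverse $g\,\Psi^{-1}\in H^\infty(\mathbb C^N,\mathbb C^N)$), one is left with the case $\sum_{n=1}^N|\varphi_n|^2=1$ a.e. Next, since $\varphi_n\in H^\infty$ and $\varphi_n\not\equiv0$ we have $|\varphi_n|>0$ a.e., so one may pick $\delta>0$ with $\delta\le N^{-1/2}$ and $m(\{|\varphi_n|\ge\delta\})>0$ for every $n$; because $\max_n|\varphi_n(\zeta)|^2\ge\tfrac1N\sum_n|\varphi_n(\zeta)|^2=\tfrac1N$ a.e., the sets $\tau_{0n}:=\{\zeta:\ |\varphi_n(\zeta)|\ge\delta\}$ have positive measure and cover $\mathbb T$ up to a null set. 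Lemma~\ref{lemtau} then provides pairwise disjoint $\sigma_n\subset\tau_{0n}$ with $m(\sigma_n)>0$ and $m(\cup_n\sigma_n)=m(\mathbb T)$, i.e.\ a measurable partition of $\mathbb T$ with $|\varphi_n|\ge\delta$ on $\sigma_n$.

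With the partition in hand, the construction proceeds as follows. On $\sigma_n$ the function $\varphi_n$ is bounded below and can be ``used''; off $\sigma_n$ it may be arbitrarily small and must be replaced, in the $n$-th output, by contributions coming from the other $\varphi_k$'s, which are bounded below precisely on their own sets $\sigma_k$. Concretely I would introduce for each $n$ the outer function $b_n$ with $|b_n|=|\varphi_n|$ on $\sigma_n$ and $|b_n|=1$ on $\mathbb T\setminus\sigma_n$; then $\log|b_n|$ is bounded, so $b_n\in H^\infty$, $\delta\le|b_n|\le1$, $b_n$ is invertible in $H^\infty$, and $\varphi_n/b_n$ is unimodular on $\sigma_n$. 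The matrix $\Psi$ is then assembled so that its $n$-th row is $b_n^{-1}$ in the diagonal slot, corrected in the other slots by terms that are, in the relevant quantitative sense, localized off $\sigma_n$; the corrections are chosen — using the disjointness of the $\sigma_n$ and the normalization $\sum_n|\varphi_n|^2=1$ — so that
\begin{enumerate}
\item[(i)] $\vartheta_n:=\bigl(\Psi[\varphi_m]_{m=1}^N\bigr)_n$ has modulus $1$ a.e.\ on all of $\mathbb T$, hence is inner; and
\item[(ii)] $\det\Psi$ is outer and bounded below on $\mathbb T$, hence invertible in $H^\infty$, so that $\Psi$ is invertible in $H^\infty(\mathbb C^N,\mathbb C^N)$.
\end{enumerate}
Once (i) and (ii) are checked, multiplying $\Psi$ by $g^{-1}$ restores the original $\varphi_n$ and finishes the proof.

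The main obstacle is precisely the simultaneous realization of (i) and (ii). To see why some care is unavoidable: if one happened to have $\inf_{z\in\mathbb D}\sum_n|\varphi_n(z)|^2>0$, then Step~3 could be bypassed by a soft argument — by Tolokonnikov's completion lemma one embeds the column $[\varphi_n]_{n=1}^N$ (of norm $\le1$) as the first column of an invertible $G\in H^\infty(\mathbb C^N,\mathbb C^N)$, embeds $N^{-1/2}[\vartheta_n]_{n=1}^N$ for a fixed corona family of inner functions (say $\vartheta_1(z)=z$, $\vartheta_2(z)=\exp\tfrac{z+1}{z-1}$, and $\vartheta_3=\dots=\vartheta_N\equiv1$) as the first column of an invertible $F$, and takes $\Psi=\sqrt N\,FG^{-1}$. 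In general this corona condition fails, and it cannot be restored by any invertible multiplier, since an invertible $\Psi$ with $\Psi[\varphi_n]=[\vartheta_n]$ forces $\|[\vartheta_n](z)\|\asymp\|[\varphi_n](z)\|$ throughout $\mathbb D$: the $\vartheta_n$ must inherit every approximate common zero of the $\varphi_n$, so the $\vartheta_n$ and $\Psi$ have to be manufactured together. Naive choices fail: taking $\vartheta_n$ to be the inner part of $\varphi_n$ and $\Psi$ diagonal makes $\det\Psi$ acquire zeros or singular inner factors in $\mathbb D$, violating (ii). The function of the disjoint sets $\sigma_n$ from Lemma~\ref{lemtau} is exactly to localize the construction so that the contributions that make each $\vartheta_n$ inner do not destroy the outer character and the lower bound of $\det\Psi$; carrying this through — the honest content of the lemma — is the step I expect to require the real work.
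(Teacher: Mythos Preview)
Your setup through Lemma~\ref{lemtau} is fine and matches the paper's, but the proof is not complete: you explicitly stop at ``carrying this through \ldots\ is the step I expect to require the real work,'' and the sketch you give for that step points in a direction that is both vague and harder than necessary. Aiming for $|\vartheta_n|\equiv 1$ a.e.\ directly, with $b_n^{-1}$ on the diagonal and unspecified ``corrections'' off it, forces you to solve an exact modulus problem row by row while simultaneously controlling $\det\Psi$; you give no mechanism for doing this, and the diagonal term $b_n^{-1}\varphi_n$ is unimodular only on $\sigma_n$, so the corrections would have to do heavy lifting on $\mathbb T\setminus\sigma_n$.

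The missing idea is that you should not try to hit unimodular outputs in one shot. Instead, first build a matrix $K=[\kappa_{nk}]$ of \emph{outer} entries, chosen purely from the partition $\{\sigma_k\}$, so that two crude lower bounds hold: $|\det K|$ is bounded away from $0$ on $\mathbb D$, and each row-sum $\sum_k\kappa_{nk}\varphi_k$ is bounded away from $0$ a.e.\ on $\mathbb T$. Concretely, take $|\kappa_{nn}|=1$ on $\sigma_n$ and $=a$ off $\sigma_n$, and $|\kappa_{nk}|=1$ on $\sigma_k$ and $=b$ off $\sigma_k$ for $k\neq n$, with $0<b\ll a\ll\delta$ so that $a^{N-1}>N!\,b$ (this handles $\det K$ by expanding the determinant and using that on each $\sigma_k$ every nontrivial permutation product has a factor of size $b$) and $\delta>a+(N-2)b$ (this handles the row-sums: on $\sigma_k$ the $k$-th term dominates). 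Then write $\sum_k\kappa_{nk}\varphi_k=\vartheta_n\eta_n$ with $\vartheta_n$ inner and $\eta_n$ outer; the lower bound gives $\eta_n^{-1}\in H^\infty$, and $\Psi:=[\kappa_{nk}/\eta_n]_{n,k}$ satisfies $\Psi[\varphi_k]=[\vartheta_n]$ with $\det\Psi=\det K/\prod_n\eta_n$ invertible in $H^\infty$. The normalization by $g$ is harmless but unnecessary once you proceed this way.
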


\begin{proof}  Take $\delta_n$, $n=1,\ldots, N$, such that 
 $0<\delta_n<\|\varphi_n\|_\infty$ for all $n=1,\ldots, N$, and  $\sum_{n=1}^N\delta_n^2<c^2$. Set 
\begin{equation*}\tau_{0n}=\{\zeta\in\mathbb T:\ |\varphi_n(\zeta)|\geq \delta_n\}, \ \ n=1,\ldots, N.
\end{equation*}  
Then $m(\tau_{0n})>0$ for every $n=1,\ldots, N$ and $m(\cup_{n=1}^N\tau_{0n})=1$. By Lemma \ref{lemtau},  there exist measurable sets 
$\sigma_n$, $n=1,\ldots, N$, such that $\sigma_n\subset\tau_{0n}$, $m(\sigma_n)>0$, $\sigma_n\cap\sigma_k=\emptyset$, if $n\neq k$, 
$n,k=1, \ldots, N$, and  $m(\cup_{n=1}^N\sigma_n)=1$. 

Set $\delta=\min_{1\leq n\leq N}\delta_n$. Take $a$, $b>0$ such that $a^{N-1}>N!b$ and $\delta>a+(N-2)b$.
 Define outer functions $\kappa_{nk}$, $n,k=1, \ldots, N$, by the formulas
\begin{equation*} |\kappa_{nn}(\zeta)|=
\begin{cases} a, \text{ if }\zeta\in\mathbb T\setminus\sigma_n,\\ 1, \text{ if }\zeta\in\sigma_n,\end{cases} \ \  
|\kappa_{nk}(\zeta)|=
\begin{cases} b, \text{ if }\zeta\in\mathbb T\setminus\sigma_k,\\ 1, \text{ if }\zeta\in\sigma_k,\end{cases} \ \ n\neq k.
\end{equation*} 
Set $\kappa=\det[\kappa_{nk}]_{n,k=1}^N$. Then 
\begin{equation}\label{kappa}
|\kappa(z)|\geq a^{N-1}-N!b, \ \ z\in\mathbb D,
\end{equation} and 
\begin{equation}\label{kappaphi} \Bigl|\sum_{k=1}^N\kappa_{nk}(\zeta)\varphi_k(\zeta)\Bigr|\geq\delta-a-(N-2)b \  
\text{ for a.e. }\zeta\in\mathbb T,\  n=1,\ldots,N.
\end{equation} 
Indeed, 
\begin{equation*}
\kappa=\prod_{n=1}^N\kappa_{nn}+\sum_{\{n_j\}_{j=1}^N}(-1)^s\prod_{j=1}^N\kappa_{n_j j}, 
\end{equation*}
where $s=0,1$ (depending on  $\{n_j\}_{j=1}^N$), and $\{n_j\}_{j=1}^N$ is a permutation of $\{1,\ldots, N\}$ such that $n_j\neq j$ for at least one $j$.  (The sum is taken by all such permutations.)
We have
$|\prod_{n=1}^N\kappa_{nn}|=a^{N-1}$ a.e. on $\mathbb T$.   
Since $\kappa_{nn}$ are outer, we have $\prod_{n=1}^N\kappa_{nn}$ is outer. Thus, $\prod_{n=1}^N\kappa_{nn}(z)=a^{N-1}$ 
for every $z\in\mathbb D$. Let $\{n_j\}_{j=1}^N$ be a permutation of $\{1,\ldots, N\}$  such that $n_j\neq j$ for at least one $j$, and let $1\leq k\leq N$. 
Then there exists $l\neq k$, $1\leq l\leq N$,  such that $n_l\neq l$. Consequently, 
$|\kappa_{n_l l}(\zeta)|=b$ for a.e. $\zeta\in\sigma_k$, and $|\prod_{j=1}^N\kappa_{n_j j}(\zeta)|\leq b$ 
for  a.e. $\zeta\in\sigma_k$. Since  $m(\cup_{k=1}^N\sigma_k)=1$, we conclude that 
$|\prod_{j=1}^N\kappa_{n_j j}(\zeta)|\leq b$ 
for  a.e. $\zeta\in\mathbb T$. Consequently, $|\prod_{j=1}^N\kappa_{n_j j}(z)|\leq b$, $z\in\mathbb D$.
 Thus, 
\begin{equation*}
|\kappa(z)|\geq\Bigl|\prod_{n=1}^N\kappa_{nn}(z)\Bigr|-\sum_{\{n_j\}_{j=1}^N}
\Bigl|\prod_{j=1}^N\kappa_{n_j j}(z)\Bigr|\geq
 a^{N-1}-N!b, \ \ z\in\mathbb D.
\end{equation*}
The estimate \eqref{kappa} is proved.
Let $1\leq n\leq N$. If $\zeta\in\sigma_n$, then 
 \begin{equation*}\Bigl|\sum_{k=1}^N\kappa_{nk}(\zeta)\varphi_k(\zeta)\Bigr|\geq
|\kappa_{nn}(\zeta)\varphi_n(\zeta)|-\sum_{k\neq n}|\kappa_{nk}(\zeta)\varphi_k(\zeta)|\geq\delta_n-(N-1)b.
\end{equation*} 
If $\zeta\in\sigma_k$ for $k\neq n$,  then 
 \begin{equation*} \Bigl|\sum_{l=1}^N\kappa_{nl}(\zeta)\varphi_l(\zeta)\Bigr|\geq
|\kappa_{nk}(\zeta)\varphi_k(\zeta)|-\sum_{l\neq k}|\kappa_{nl}(\zeta)\varphi_l(\zeta)|\geq\delta_k-a-(N-2)b.
\end{equation*} 
The estimate \eqref{kappaphi} is proved. Set $\sum_{k=1}^N\kappa_{nk}\varphi_k=\vartheta_n\eta_n$, $n=1,\ldots, N$, 
where $\vartheta_n$ is inner and $\eta_n$ is outer. By \eqref{kappaphi}, $1/\eta_n\in H^\infty$, $n=1,\ldots, N$. 
Set $\Psi=[\kappa_{nk}/\eta_n]_{n,k=1}^N$. Then   $\det\Psi=\kappa/\prod_{n=1}^N\eta_n$. Consequently, $\Psi$ 
is invertible 
in $H^\infty(\mathbb C^N, \mathbb C^N)$. The equality in the conclusion of the lemma follows from the defiition of $\Psi$.
\end{proof}

\begin{corollary} \label{cor1} Lemma \ref{lem1} holds true without assumption $\varphi_n\not\equiv 0$ for every $n=1,\ldots, N$. 
\end{corollary}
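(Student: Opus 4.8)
The plan is to reduce to Lemma \ref{lem1} by first setting aside the functions $\varphi_n$ that vanish identically. Reordering the $\varphi_n$ amounts to multiplying the column $[\varphi_n]_{n=1}^N$ on the left by a permutation matrix, which is invertible in $H^\infty(\mathbb C^N,\mathbb C^N)$, so without loss of generality one may assume that $\varphi_1,\dots,\varphi_M$ are not identically zero while $\varphi_{M+1}=\dots=\varphi_N\equiv 0$ for some $M$ with $1\le M\le N$; here $M\ge 1$, since the assumption that the essential infimum of $\sum_{n=1}^N|\varphi_n|^2$ equals $c^2>0$ rules out all the $\varphi_n$ vanishing. Because $\sum_{n=1}^M|\varphi_n|^2=\sum_{n=1}^N|\varphi_n|^2$, the collection $\varphi_1,\dots,\varphi_M$ again satisfies the hypotheses with the same constant $c$.

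If $M=N$ there is nothing to prove, so suppose $M<N$. First I would obtain an invertible $\Psi_0\in H^\infty(\mathbb C^M,\mathbb C^M)$ and inner functions $\vartheta_1,\dots,\vartheta_M$ with $\Psi_0[\varphi_n]_{n=1}^M=[\vartheta_n]_{n=1}^M$. For $M\ge 2$ this is exactly Lemma \ref{lem1}. The case $M=1$ is not covered by that lemma (where $N\ge 2$) and must be handled directly: factor $\varphi_1=\vartheta_1\eta_1$ with $\vartheta_1$ inner and $\eta_1$ outer; the lower bound $|\eta_1|=|\varphi_1|\ge c$ a.e.\ on $\mathbb T$ gives $1/\eta_1\in H^\infty$, and one takes $\Psi_0=[1/\eta_1]$.

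It then remains to pad $\Psi_0$ to an invertible $N\times N$ matrix over $H^\infty$, producing inner functions in the last $N-M$ coordinates. Let $\psi\in H^\infty(\mathbb C^M,\mathbb C)$ be the first row of $\Psi_0$, so that $\psi[\varphi_n]_{n=1}^M=\vartheta_1$, and let $\Psi$ be the block lower-triangular function with $\Psi_0$ in the top-left block, $I_{\mathbb C^{N-M}}$ in the bottom-right block, zero in the top-right block, and, in the bottom-left block, the $(N-M)\times M$ matrix each of whose rows equals $\psi$. Since $\det\Psi=\det\Psi_0$ is bounded away from $0$, $\Psi$ is invertible in $H^\infty(\mathbb C^N,\mathbb C^N)$. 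Because the last $N-M$ entries of $[\varphi_n]_{n=1}^N$ vanish, $\Psi[\varphi_n]_{n=1}^N$ has first $M$ entries $\vartheta_1,\dots,\vartheta_M$ and remaining entries all equal to $\vartheta_1$; putting $\vartheta_n=\vartheta_1$ for $M<n\le N$ completes the argument. The only genuine subtlety is the isolated case $M=1$; everything else is routine bookkeeping, together with the fact that invertibility in $H^\infty$ of a block-triangular matrix-valued function is equivalent to invertibility of its diagonal blocks.
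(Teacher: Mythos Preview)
Your proof is correct and follows essentially the same approach as the paper's: permute the nonzero $\varphi_n$ to the front, then use an invertible $H^\infty$ matrix to reduce to Lemma~\ref{lem1}. The only difference is the order of operations. The paper first copies $\varphi_1$ into each of the identically-zero slots via the elementary matrix $I_N+A_2$ (so all $N$ entries become nonzero) and then applies Lemma~\ref{lem1} to the full $N$-vector; you instead apply Lemma~\ref{lem1} to the $M$-subvector first and pad afterwards. The paper's ordering has the small advantage of never leaving the size-$N$ setting, so the case $M=1$ needs no separate treatment; your handling of that case via the inner--outer factorization of $\varphi_1$ is of course fine.
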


\begin{proof} Denote by $M$ the quantity of $n$ for which $\varphi_n\not\equiv 0$. Then $1\leq M\leq N$. Multiplying the column  
$[\varphi_n]_{n=1}^N$ by the (invertible) $N\times N$-matrix $A_1$ whose columns is an appropriate 
permutation of the columns of the unit $N\times N$-matrix $I_N$, we obtain the column  
$[\varphi_{1n}]_{n=1}^N$ such that  $\varphi_{1n}\not\equiv 0$ for $n=1,\ldots, M$ and  $\varphi_{1n}\equiv 0$ for $n=M+1,\ldots, N$. 
Note that $\{\varphi_{1n}\}_{n=1}^N$ is a permutation of  $\{\varphi_n\}_{n=1}^N$. 
Let $A_2$ be the $N\times N$-matrix  whose elements of first columns with indices  $n=M+1,\ldots, N$ are equal to $1$,
 and all other elements of $A_2$ are equal to $0$. Set $[\varphi_{2n}]_{n=1}^N=(I_N+A_2)[\varphi_{1n}]_{n=1}^N$. 
Then  $\varphi_{2n}=\varphi_{1n}$ for  $n=1,\ldots, M$ and  $\varphi_{2n}=\varphi_{11}$ for $n=M+1,\ldots, N$. 
The column $[\varphi_{2n}]_{n=1}^N$ satisfy the assumption of Lemma \ref{lem1}. The matrix $\Psi(I_N+A_2)A_1$ is the required matrix.
\end{proof}

\begin{theorem}\label{thm1} Let $M,N\in\mathbb N$, $N\geq 2$,  $1\leq M< N$, 
and let $\Theta\in H^\infty(\mathbb C^{N-M}, \mathbb C^N)$ be an inner function. 
Then there exist $\mathcal M_n\in\operatorname{Lat}T_\Theta$, $n=1,\ldots, N$, such that $T_\Theta|_{\mathcal M_n}\approx S$ for every $n$, $1\leq n\leq N$, and 
$\mathcal H(\Theta)=\mathcal M_1+\ldots+\mathcal M_N$  in the following sense: for every $x\in\mathcal H$ there exist 
  $x_n\in\mathcal M_n$, $n=1,\ldots, N$, such that $x=x_1+\ldots+x_N$.
\end{theorem}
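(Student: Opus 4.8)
The plan is to realize each $\mathcal M_n$ as the compression to $\mathcal H(\Theta)$ of a multiplier range inside $H^2_N$. Since $\mathcal H(\Theta)^{\perp}=\Theta H^2_{N-M}$ is invariant for $S_N$, one has $P_{\mathcal H(\Theta)}S_N=T_\Theta P_{\mathcal H(\Theta)}$; hence for any column $v\in H^\infty(\mathbb C,\mathbb C^N)$ the map $J_v:=P_{\mathcal H(\Theta)}M_v\colon H^2\to\mathcal H(\Theta)$, where $M_v$ is the operator of multiplication by $v$, satisfies $J_vS=T_\Theta J_v$, and $\mathcal M_v:=J_vH^2=P_{\mathcal H(\Theta)}(vH^2)\in\operatorname{Lat}T_\Theta$. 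If, in addition,
\[
\varepsilon_v:=\operatorname*{ess\,inf}_{\zeta\in\mathbb T}\operatorname{dist}\bigl(v(\zeta),\Theta(\zeta)\mathbb C^{N-M}\bigr)>0,
\]
then $\|J_vf\|=\operatorname{dist}(vf,\Theta H^2_{N-M})\ge\varepsilon_v\|f\|$ (estimating fibrewise and using that $\Theta(\zeta)\mathbb C^{N-M}$ is a linear subspace), so $J_v$ is bounded below, $\mathcal M_v$ is closed, and $J_v\colon H^2\to\mathcal M_v$ is invertible; thus $T_\Theta|_{\mathcal M_v}\approx S$. Consequently the theorem reduces to producing columns $v^{(1)},\dots,v^{(N)}\in H^\infty(\mathbb C,\mathbb C^N)$ with $\varepsilon_{v^{(n)}}>0$ for every $n$ such that the $N\times N$ matrix $V=[\,v^{(1)}\ \cdots\ v^{(N)}\,]$ is invertible in $H^\infty(\mathbb C^N,\mathbb C^N)$: then $VH^2_N=H^2_N$ forces $\mathcal M_{v^{(1)}}+\dots+\mathcal M_{v^{(N)}}=P_{\mathcal H(\Theta)}\bigl(\sum_nv^{(n)}H^2\bigr)=P_{\mathcal H(\Theta)}(VH^2_N)=\mathcal H(\Theta)$, and the decomposition is algebraic precisely because $V^{-1}\in H^\infty(\mathbb C^N,\mathbb C^N)$.

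To construct the $v^{(n)}$ I would exploit that $\Theta$ is inner: the fibres $\Theta(\zeta)\mathbb C^{N-M}$ form a measurable sub-bundle of $\mathbb C^N$ of dimension $N-M<N$, so for a.e.\ $\zeta$ there is an $M$-dimensional complement in which suitable directions are uniformly transversal to $\Theta(\zeta)\mathbb C^{N-M}$. Using Lemma \ref{lemtau} I would disjointify the pieces of $\mathbb T$ on which a chosen finite family of analytic ``candidate directions'' stays transversal, assemble on these pieces, for each $n$, a bounded analytic column $[\varphi^{(n)}_k]_{k=1}^N$ that is transversal to $\Theta(\zeta)\mathbb C^{N-M}$ and satisfies $\operatorname*{ess\,inf}_{\zeta\in\mathbb T}\sum_{k=1}^N|\varphi^{(n)}_k(\zeta)|^2>0$, and then apply Corollary \ref{cor1} (that is, Lemma \ref{lem1}) to replace $[\varphi^{(n)}_k]_k$, after multiplication by an invertible matrix in $H^\infty(\mathbb C^N,\mathbb C^N)$, by a column $v^{(n)}=[\vartheta^{(n)}_k]_k$ with inner entries; transversality survives because the normalising multipliers are boundedly invertible on $\mathbb T$. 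With all entries of $V$ unimodular a.e., $|\det V(\zeta)|$ is a sum of $N!$ unimodular terms, which can be kept bounded away from $0$ by calibrating the disjoint pieces and the perturbation parameters in the manner of the determinant estimate \eqref{kappa} in the proof of Lemma \ref{lem1}.

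The main obstacle is meeting the two requirements simultaneously. Demanding $\operatorname{dist}(v^{(n)}(\zeta),\Theta(\zeta)\mathbb C^{N-M})\ge\varepsilon$ for every $n$ pushes every $v^{(n)}(\zeta)$ toward the $M$-dimensional fibre $\mathbb C^N\ominus\Theta(\zeta)\mathbb C^{N-M}$, whereas invertibility of $V$ demands that $v^{(1)}(\zeta),\dots,v^{(N)}(\zeta)$ span all of $\mathbb C^N$, with a uniform lower bound on $|\det V(\zeta)|$; since $M<N$ these two pulls genuinely conflict, and the columns must be ``spread'' across the fibre $\Theta(\zeta)\mathbb C^{N-M}$ in a quantitatively controlled way. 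Lemma \ref{lemtau} (disjointifying measurable sets without loss of measure) and Lemma \ref{lem1}/Corollary \ref{cor1} (normalising a column to inner functions by an invertible multiplier, with an explicit determinant bound for the normalising matrix) are exactly the instruments designed to resolve this tension, and the bulk of the argument will be the uniform bookkeeping needed to verify their hypotheses with constants independent of $\zeta$.
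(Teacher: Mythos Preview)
Your framework is exactly the paper's: set $V\in H^\infty(\mathbb C^N,\mathbb C^N)$ invertible, let $v^{(n)}$ be its $n$th column, and take $\mathcal M_n=P_{\mathcal H(\Theta)}(v^{(n)}H^2)$; invertibility of $V$ gives the algebraic sum decomposition, and a lower bound on $J_{v^{(n)}}$ gives $T_\Theta|_{\mathcal M_n}\approx S$. Your fibrewise estimate $\|J_vf\|\ge\varepsilon_v\|f\|$ is also correct. The gap is entirely in the construction of $V$.

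The paper does \emph{not} build the columns $v^{(n)}$ one at a time. It first invokes a nontrivial external fact (\cite{sznf74}, \cite{tak}): there is a $*$-inner $\Phi\in H^\infty(\mathbb C^N,\mathbb C^M)$ with $\Phi\Theta_1=\mathbb O$ (hence $\Phi\Theta=\mathbb O$). The first row $[\varphi_{1n}]_{n=1}^N$ of $\Phi$ then satisfies $\sum_n|\varphi_{1n}(\zeta)|^2=1$ a.e., and Lemma~\ref{lem1}/Corollary~\ref{cor1} is applied \emph{once} to this single row, yielding one invertible $\Psi$ with $\Psi[\varphi_{1n}]_n=[\vartheta_n]_n$ inner. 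One then sets $V=\Psi^\top$; invertibility is automatic. The lower bound for each $Y_n=J_{v^{(n)}}$ is obtained not fibrewise but by observing that $\Phi$ kills $\Theta H^2_{N-M}$, so $\Phi Y_nh=\Phi\Psi^\top h_{(n,N)}$, whose first coordinate is $\vartheta_n h$; since $\|\Phi\|=1$ this gives $\|Y_nh\|\ge\|h\|$.

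Your plan, by contrast, is to manufacture $N$ separate ``transversal'' analytic columns $[\varphi^{(n)}_k]_k$, apply Corollary~\ref{cor1} to each, and then argue that the resulting matrix $V$ of inner entries has $\det V$ bounded below. Two steps break down. First, the assertion that ``transversality survives because the normalising multipliers are boundedly invertible'' is false: if $\Psi^{(n)}$ is the invertible matrix produced by Lemma~\ref{lem1}, then $v^{(n)}=\Psi^{(n)}[\varphi^{(n)}_k]_k$ need not stay away from $\Theta(\zeta)\mathbb C^{N-M}$, because $\Psi^{(n)}(\zeta)$ has no reason to preserve that subspace (a generic invertible matrix can rotate a vector from the complement into the range). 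Second, having all entries of $V$ unimodular a.e.\ gives no control on $\det V$; the determinant estimate \eqref{kappa} in Lemma~\ref{lem1} works because the diagonal dominates by design, but after $N$ independent applications of the lemma there is no such structure left to exploit. You also never say how to produce the initial analytic ``candidate directions'' uniformly transversal to the moving fibre $\Theta(\zeta)\mathbb C^{N-M}$; this is precisely what the existence of the $*$-inner complement $\Phi$ supplies in the paper, and it is the ingredient your outline is missing.
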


\begin{proof} Let $\Theta=\Theta_1\Omega$ be the $*$-canonical factorization of $\Theta$, see {\cite[(V.4.20)]{sznfbk}}. 
Then $\Theta_1\in  H^\infty(\mathbb C^{N-M}, \mathbb C^N)$ is inner $*$-outer 
and $\Omega\in H^\infty(\mathbb C^{N-M}, \mathbb C^{N-M})$ is inner from both sides. By \cite{sznf74} and \cite{tak}, there exists an 
 $*$-inner 
function $\Phi\in H^\infty(\mathbb C^N, \mathbb C^M)$ such that $\Phi\Theta_1=\mathbb O$. Let
\begin{equation*}\Phi=\left[\begin{matrix} \varphi_{11} & \ldots & \varphi_{1N}\\ 
 \ldots & \ldots & \ldots \\ \varphi_{M1} & \ldots & \varphi_{MN}\end{matrix}\right], 
\end{equation*}
where $\varphi_{kn}\in H^\infty$, $k=1,\ldots, M$, $n=1,\ldots, N$. Since $\Phi$ is $*$-inner, we have 
$\sum_{n=1}^N|\varphi_{kn}(\zeta)|^2=1$ for a.e. $\zeta\in\mathbb T$ and every $k=1,\ldots, M$. 
Let
 $\Psi\in H^\infty(\mathbb C^N, \mathbb C^N)$ be a function from Corollary \ref{cor1} applied to the 
functions $\varphi_{1n}$,  $n=1,\ldots, N$. Then $\Psi$ is invertible in $H^\infty(\mathbb C^N, \mathbb C^N)$ and 
\begin{equation*} \Psi[\varphi_{1n}]_{n=1}^N=[\vartheta_n]_{n=1}^N
\end{equation*}
for some inner functions $\vartheta_n$, $n=1,\ldots, N$.
Denote by $\Psi^\top$ the transpose of $\Psi$. Then 
\begin{equation*}
\Phi\Psi^\top=\left[\begin{matrix} \vartheta_1 & \ldots & \vartheta_N\\ 
  \psi_{21} & \ldots & \psi_{2N}\\ 
 \ldots & \ldots & \ldots \\ \psi_{M1} & \ldots & \psi_{MN}\end{matrix}\right]
\end{equation*}
for some functions $\psi_{kn}\in H^\infty$, $k=2,\ldots, M$, $n=1,\ldots, N$.

Let  $n$, $1\leq n\leq N$, be fixed. For $h\in H^2$ denote by $h_{(n,N)}$ the column of the height $N$ 
whose element on $n$th place is equal to $h$ 
and all other elements are equal to $0$.  Then  $h_{(n,N)}\in H^2_N$, and 
\begin{equation}\label{vartheta}
\Phi\Psi^\top h_{(n,N)}=\left[\begin{matrix} \vartheta_n h\\ 
  \psi_{2n}h\\ 
 \ldots  \\ \psi_{Mn}h \end{matrix}\right].
\end{equation}
Define $Y_n\in\mathcal L( H^2, \mathcal H(\Theta))$ by the formula
  $Y_n h=P_{\mathcal H(\Theta)}\Psi^\top h_{(n,N)}$, $h\in H^2$. 
Then $Y_n S=T_\Theta Y_n$ and $\|Y_n h\|\geq\|h\|$ for every $h\in H^2$.
Indeed, 
\begin{align*} Y_n Sh&=P_{\mathcal H(\Theta)}\Psi^\top (Sh)_{(n,N)}=
P_{\mathcal H(\Theta)}S_N\Psi^\top h_{(n,N)}=P_{\mathcal H(\Theta)}S_NP_{\mathcal H(\Theta)}\Psi^\top h_{(n,N)} 
\\ &=
T_\Theta P_{\mathcal H(\Theta)}\Psi^\top h_{(n,N)}= T_\Theta Y_n h, \ \ h\in H^2.
\end{align*}
Since $\Phi\Theta_1=\mathbb O$ and $\Theta=\Theta_1\Omega$, we have  
\begin{equation*} \Phi Y_n h=\Phi P_{\mathcal H(\Theta)}\Psi^\top h_{(n,N)}= 
\Phi(\Psi^\top h_{(n,N)}-P_{\Theta H^2_{N-M}}\Psi^\top h_{(n,N)})= \Phi\Psi^\top h_{(n,N)}.
\end{equation*}
It follows from the last equality and \eqref{vartheta} that 
\begin{equation*} \|Y_n h\| = \|\Phi\| \|Y_n h\|\geq  \|\Phi Y_n h\|= \|\Phi\Psi^\top h_{(n,N)}\|\geq \|\vartheta_n h\|=\|h\|, \ \ h\in H^2.
\end{equation*}
(The equality $\|\Phi\|=1$ holds true, because $\Phi$ is $*$-inner.)
Set $\mathcal M_n=Y_n H^2$ and consider $Y_n$ as a transformation from $H^2$ to $\mathcal M_n$. Then 
  $\mathcal M_n\in\operatorname{Lat}T_\Theta$ and $Y_n$ realizes the relation $T_\Theta|_{\mathcal M_n}\approx S$.  

The equality $\mathcal H(\Theta)=\mathcal M_1+\ldots+\mathcal M_N$ 
 follows from the equality $H^2_N=\Psi^\top H^2_N$, which is fulfilled, because $\Psi$ is invertible in 
$H^\infty(\mathbb C^N, \mathbb C^N)$.
\end{proof}

\begin{theorem}\label{thmmain} Let $T\in\mathcal L(\mathcal H)$. Then the following are equivalent:

$\mathrm{(i)}$ there exists a contraction $R$ of class $C_{\cdot 0}$ with $d_R<d_{R^*}<\infty$ such that $T\approx R$; 

$\mathrm{(ii)}$  there exist $N\in\mathbb N$ and  $\mathcal M_n\in\operatorname{Lat}T$, $n=1,\ldots, N$, 
such that $T|_{\mathcal M_n}\approx S$ for every  $n=1,\ldots, N$, and 
$\mathcal H=\mathcal M_1+\ldots+\mathcal M_N$ in the following sense: for every $x\in\mathcal H$ there exist 
  $x_n\in\mathcal M_n$, $n=1,\ldots, N$, such that $x=x_1+\ldots+x_N$. 

Moreover, if $\mathrm{(i)}$ and  $\mathrm{(ii)}$ are fulfilled, then  
\begin{align*}\min\{d_{R^*}\ : \ R \text{ is a contraction such that }T\approx R\}\\=\min\{N\in\mathbb N\ :\ N 
\text{  satisfies assumption }\mathrm{(ii)}\}.
\end{align*} 
\end{theorem}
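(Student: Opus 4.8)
The plan is to prove the two implications of Theorem~\ref{thmmain} separately and then, in the course of proving them, keep track of the defect index $d_{R^*}$ versus the number $N$ of shift-type subspaces so as to obtain the final ``moreover'' equality.

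\textbf{Proof of $\mathrm{(i)}\Rightarrow\mathrm{(ii)}$.} Suppose $T\approx R$ with $R$ a contraction of class $C_{\cdot 0}$ and $M:=d_R<d_{R^*}=:N<\infty$. Since $d_{R^*}<\infty$, the operator $R$ is unitarily equivalent to $T_\Theta$ for an inner function $\Theta\in H^\infty(\mathbb C^{d_R},\mathbb C^{d_{R^*}})=H^\infty(\mathbb C^{N-(N-M)}, \mathbb C^N)$; writing $M'=N-M$ we have $1\le M'<N$ and $\Theta\in H^\infty(\mathbb C^{N-M'},\mathbb C^N)$, so Theorem~\ref{thm1} applies (with the roles of the parameters suitably renamed: the source dimension is $d_R$, which is $<N$, so it equals $N-M'$ for $M'=N-d_R\ge1$). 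Theorem~\ref{thm1} produces $\mathcal M'_n\in\operatorname{Lat}T_\Theta$, $n=1,\dots,N$, with $T_\Theta|_{\mathcal M'_n}\approx S$ and $\mathcal H(\Theta)=\mathcal M'_1+\dots+\mathcal M'_N$ in the stated (algebraic) sense. Now transport everything through the similarity: if $W\colon\mathcal H\to\mathcal H(\Theta)$ is invertible with $WT=T_\Theta W$, set $\mathcal M_n=W^{-1}\mathcal M'_n$. Then $\mathcal M_n\in\operatorname{Lat}T$, the operator $W^{-1}(\cdot)|_{\mathcal M'_n}$ realizes $T|_{\mathcal M_n}\approx T_\Theta|_{\mathcal M'_n}\approx S$, and since $W^{-1}$ is a linear bijection the decomposition $\mathcal H(\Theta)=\sum\mathcal M'_n$ pulls back to $\mathcal H=\sum\mathcal M_n$ in the same sense. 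Thus $\mathrm{(ii)}$ holds with this $N=d_{R^*}$.

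\textbf{Proof of $\mathrm{(ii)}\Rightarrow\mathrm{(i)}$.} Given $\mathcal M_n\in\operatorname{Lat}T$ with $T|_{\mathcal M_n}\approx S$ and $\mathcal H=\mathcal M_1+\dots+\mathcal M_N$ algebraically, choose invertible $Y_n\colon H^2\to\mathcal M_n$ (viewed inside $\mathcal H$) with $Y_nS=TY_n$. Assemble them into $Y\colon H^2_N\to\mathcal H$, $Y(h_1,\dots,h_N)=\sum_n Y_nh_n$, so that $YS_N=TY$. The algebraic span condition says exactly $Y H^2_N=\mathcal H$, i.e.\ $Y$ is surjective; hence $Y^*$ is bounded below, so $T^*\approx S_N^*|_{(\ker Y)^\perp}$ (because $\ker Y$ is $S_N$-invariant and $Y$ induces an invertible map $H^2_N/\ker Y\to\mathcal H$, equivalently $Y^*$ is an isomorphism onto a closed invariant subspace $(\ker Y)^\perp$ of $S_N^*$). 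By Beurling--Lax(--Halmos) $\ker Y=\Theta H^2_K$ for some inner $\Theta\in H^\infty(\mathbb C^K,\mathbb C^N)$ with $K\le N$, and $(\ker Y)^\perp=\mathcal H(\Theta)$; so $T^*\approx T_\Theta^*$, i.e.\ $T\approx T_\Theta^*$. Now $T_\Theta^*$ is itself unitarily equivalent to a contraction of class $C_{\cdot0}$ with finite defects, namely $T_{\widetilde\Theta}$-type; concretely $d_{(T_\Theta^*)^*}=d_{T_\Theta}\le N$ and $d_{T_\Theta^*}=d_{T_\Theta^*}$, and the equivalence ``$T_\Theta$ of class $C_0$ $\iff$ $d_{T_\Theta}=d_{T_\Theta^*}$'' from \cite{sznf74} lets us decide whether the defects are unequal. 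The point requiring care is ruling out equality of defects and the degenerate case $K=N$: if $K=N$ then $T_\Theta$ would be of class $C_0$ and one must check this cannot happen under $\mathrm{(ii)}$ with a genuinely ``shift-heavy'' span; I expect to handle this by observing that $K=N$ forces $\mathcal H(\Theta)$ to be a $C_0$ model, in which no $N$-tuple of shift-type invariant subspaces can span algebraically (a quasiaffinity/Fredholm-index obstruction, or the fact that $C_0(N)$ operators have $\mu_{T}\le N$ but their ``shift content'' is bounded). So $K<N$, giving $d_R=K<N=d_{R^*}$ for $R:=T_\Theta^*$ after the appropriate dualization; rewriting $R$ as a bona fide $T_\Xi$ with $\Xi\in H^\infty(\mathbb C^{d_R},\mathbb C^{d_{R^*}})$ completes $\mathrm{(i)}$.

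\textbf{The ``moreover'' equality.} From $\mathrm{(i)}\Rightarrow\mathrm{(ii)}$ we saw that any contraction $R$ with $T\approx R$ yields $\mathrm{(ii)}$ with $N=d_{R^*}$, so $\min\{N:\mathrm{(ii)}\}\le\min\{d_{R^*}:T\approx R\}$. For the reverse inequality, in $\mathrm{(ii)}\Rightarrow\mathrm{(i)}$ the contraction produced had $d_{R^*}=N$ (the source and target dimensions of the Beurling--Lax inner function are $K\le N$ and $N$, and $d_{(T_\Theta^*)^*}$-vs-$d_{T_\Theta^*}$ bookkeeping gives target dimension $N$), so a minimal $N$ satisfying $\mathrm{(ii)}$ gives an $R$ with $d_{R^*}=N$, whence $\min\{d_{R^*}:T\approx R\}\le\min\{N:\mathrm{(ii)}\}$. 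The main obstacle throughout is the careful defect bookkeeping under the $*$-operation and the Beurling--Lax representation — in particular making sure the inner function realizing $\ker Y$ has source dimension strictly less than $N$ (so that the defects are unequal, not equal, which would land us in $C_0(N)$), and that the number of shift-type subspaces matches the larger defect $d_{R^*}$ rather than $d_R$. This is exactly where Theorem~\ref{thm1} (for one direction) and the surjectivity/quotient argument with \cite{sznf74} (for the other) do the real work.
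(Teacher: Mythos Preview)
Your overall architecture matches the paper's: $(\mathrm{i})\Rightarrow(\mathrm{ii})$ via Theorem~\ref{thm1}, and $(\mathrm{ii})\Rightarrow(\mathrm{i})$ by assembling the $Y_n$ into a surjection $Y\colon H^2_N\to\mathcal H$ intertwining $S_N$ and $T$, then passing to $H^2_N\ominus\ker Y$. Two points need correction.

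First, a slip in the dualization. From $Y^*T^*=S_N^*Y^*$ and the fact that $Y^*$ is an isomorphism onto $(\ker Y)^\perp=\mathcal H(\Theta)$ (which is $S_N^*$-invariant), you correctly get $T^*\approx S_N^*|_{\mathcal H(\Theta)}=T_\Theta^*$. But then the conclusion is $T\approx T_\Theta$, not $T\approx T_\Theta^*$. Once you write $R=T_\Theta$, everything is immediate: $R$ is a $C_{\cdot0}$ contraction with $d_{R^*}\le N$ and $d_R\le K$, and no $*$-bookkeeping or passage to $\widetilde\Theta$ is needed. This confusion propagates into your ``moreover'' paragraph; note also that the construction gives $d_{R^*}\le N$, not $d_{R^*}=N$ in general, but the inequality is all you need for the minima comparison.

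Second, and more substantively, the step showing $d_R<d_{R^*}$ (equivalently, ruling out $K=N$) is left as an expectation rather than an argument. The suggestions you make (``Fredholm-index obstruction'', ``shift content bounded'') are vague and not the efficient route. The clean argument, which is what the paper does, is one line: hypothesis $(\mathrm{ii})$ hands you an invariant subspace $\mathcal M_1$ of $T$ with $T|_{\mathcal M_1}\approx S$; transporting it through the similarity $T\approx R$ yields $\mathcal N\in\operatorname{Lat}R$ with $R|_{\mathcal N}\approx S$. But an operator of class $C_0$ can have no such subspace (if $\theta(R)=\mathbb O$ then $\theta(R|_{\mathcal N})=\mathbb O$, while $\theta(S)\ne\mathbb O$ for every nonzero $\theta\in H^\infty$). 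Hence $R$ is not of class $C_0$, and since $d_{R^*}<\infty$, the dichotomy from \cite{sznf74} forces $d_R<d_{R^*}$. This replaces your speculative paragraph entirely and closes the gap.
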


\begin{proof} The part (i)$\Rightarrow$(ii) is Theorem \ref{thm1}. Indeed, if $R$ is  a contraction of class $C_{\cdot 0}$ 
with $d_R<d_{R^*}<\infty$, then $R\cong T_\Theta$, where $\Theta$ satisfies 
the assumption of Theorem \ref{thm1}, see references in Introduction.  

Assume that (ii) is fulfilled. Let
 $Y_n\in\mathcal I(S,T|_{\mathcal M_n})$ be invertible,   $n=1,\ldots, N$. 
Let $[h_n]_{n=1}^N\in H^2_N$, where $h_n\in H^2$,  $n=1,\ldots, N$. 
Set \begin{equation*}Y[h_n]_{n=1}^N=\sum_{n=1}^NY_n h_n. \end{equation*}
Then \begin{align*}\|Y[h_n]_{n=1}^N\|^2 & \leq\Bigl(\sum_{n=1}^N\|Y_n h_n\|\Bigr)^2
\leq N\Bigl(\sum_{n=1}^N\|Y_n h_n\|^2\Bigr)\\ & \leq N\max_{1\leq n\leq N}\|Y_n\|^2\sum_{n=1}^N\|h_n\|^2
=N\max_{1\leq n\leq N}\|Y_n\|^2\|[h_n]_{n=1}^N\|^2. 
 \end{align*}
Thus, $Y$ is bounded. The equations $YS_N=TY$ and $YH^2_N=\mathcal H$ follow from the definition 
of $Y$ and  the  assumption $\mathcal H=\mathcal M_1+\ldots+\mathcal M_N$. Furthermore, the equation $YS_N=TY$ 
implies $\ker Y\in\operatorname{Lat}S_N$. Set $\mathcal K=H^2_N\ominus\ker Y$, 
$Z=Y|_{\mathcal K}$ and $R=P_{\mathcal K}S_N|_{\mathcal K}$. Then $ZR=TZ$, $Z\mathcal K=YH^2_N=\mathcal H$ 
and $\ker Z=\{0\}$. Consequently, $T\approx R$. Clearly, $R$ is a contraction of class $C_{\cdot 0}$ with 
$d_{R^*}\leq N<\infty$. The relation $T\approx R$ and the assumptions on $T$ imply the existense of 
$\mathcal N\in \operatorname{Lat}R$ such that $R|_{\mathcal N}\approx S$. By \cite{sznf74}, $d_R<d_{R^*}$. 

The equality of minima follows from  the proof of the theorem.
 \end{proof}

\begin{remark} The representation  $x=x_1+\ldots+x_N$, $x_n\in\mathcal M_n$, $n=1,\ldots, N$, 
from the statement (ii) of Theorem \ref{thmmain} is not unique in general. Moreover, 
if such a representation is unique for every $x\in\mathcal H$, then $T\approx S_N$. Indeed, in this case 
the mapping $X\colon \mathcal H\to\oplus_{n=1}^N\mathcal M_n$, $Xx=x_1\oplus\ldots\oplus x_N$, 
where $x_n\in\mathcal M_n$, $n=1,\ldots, N$, are from the representation $x=x_1+\ldots+x_N$, 
is a well-defined linear bijection.
It is easy to see that $X$ and $X^{-1}$ are bounded by the Closed Graph Theorem. 
 Consequently, $T\approx \oplus_{n=1}^N T|_{\mathcal M_n}\approx S_N$, 
where first similarity  is realized by $X$, and second similarity follows from the assumption 
$T|_{\mathcal M_n}\approx S$ for every  $n=1,\ldots, N$. 
\end{remark}

\begin{remark} 
In \cite{vm}, for every $N\in\mathbb N$,  $N\geq 2$,  an example of a contraction $T$ is constructed such that 
$d_T+1=d_{T^*}=N$ and $T\prec S$, but $T$ is not quasisimilar (and, consequently, not similar) to any contraction 
$R$ with $d_{R^*}<N$.  The relation $T\prec S$ implies  $\mu_T\leq 2$, see \cite{sznf74}. 
(For further results on multiplicity of contractions with finite defects (not necessarily of class $C_{\cdot 0}$) see \cite{vasmult}.)
Let  $N\in\mathbb N$,  $N\geq 2$, and let $T$ be a contraction from \cite{vm} with $d_{T^*}=N$ described just above. 
Denote by $\mathcal H$ the space on which $T$ acts. The minimal quantity of shift-type invariant subspaces of $T$ 
whose \emph{linear} span is  $\mathcal H$ is equal to $N$ by \cite{vm} and Theorem \ref{thmmain},  while  the minimal quantity of shift-type invariant subspaces of $T$ 
whose \emph{closed linear} span is  $\mathcal H$ is equal to $2$ by \cite{gamal22} and \cite{sznf74}. (For a generalization of \cite{vm} see \cite{popescu}.) 
\end{remark}

\section{On similarity to contractions of class $C_0(N)$}

Let $N\in\mathbb N$. 
Let $T$ be a contraction of class  $C_{\cdot 0}$ with $d_T=d_{T^*}=N<\infty$. 
Equivalently, let $T$ be of class $C_0(N)$ (see \cite{sznf74}). 
Since $T$ is of class $C_0$, 
there is no shift-type invariant subspace of $T$, that is, such invariant subspace that the restriction of  $T$ on 
this subspace is similar to $S$. 
Moreover, it is not possible to obtain a result similar to Theorem \ref{thm1}  even if $S$ will be replaced by 
contractions $T_{\vartheta_n}$ for inner functions $\vartheta_n\in H^\infty$,   $n=1,\ldots, N$, different from each other, 
because contractions of class $C_0$ can be unicellular (the definition is recalled in Introduction). 
 If $T$ is unicellular and 
 $\mathcal H=\mathcal M_1+\ldots+\mathcal M_M$ for some $\mathcal M_n\in\operatorname{Lat}T$, $n=1,\ldots, M$, 
 $M\in\mathbb N$,
then $\mathcal H=\mathcal M_n$ for some $n$, $1\leq n\leq M$. Consequently, a statement  analogous  to Theorem \ref{thm1} must imply that 
every unicellular contraction of class $C_0(N)$ must be similar to $T_\vartheta$ for some inner function $\vartheta\in H^\infty$. 
In Example \ref{exaunicellular} below a unicellular contraction of class $C_0(2)$ is constructed which is not similar to 
 $T_\vartheta$ for any inner function $\vartheta\in H^\infty$. 

On the other side, the following proposition analogous to the statement (ii)$\Rightarrow$(i) in Theorem \ref{thmmain} holds true. 

\begin{proposition}\label{propc0} Let  $N\in\mathbb N$, and let $T\in\mathcal L(\mathcal H)$.
 Suppose that there exist  inner functions $\vartheta_n\in H^\infty$ and $\mathcal M_n\in\operatorname{Lat}T$, $n=1,\ldots, N$, 
such that $T|_{\mathcal M_n}\approx T_{\vartheta_n}$ for every  $n=1,\ldots, N$, and 
$\mathcal H=\mathcal M_1+\ldots+\mathcal M_N$ in the following sence: for every $x\in\mathcal H$ there exist 
  $x_n\in\mathcal M_n$, $n=1,\ldots, N$, such that $x=x_1+\ldots+x_N$.

Then there exists a contraction $R$ of class $C_0(M)$ with $M\leq N$ such that $T\approx R$. 
\end{proposition}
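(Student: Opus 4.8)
The plan is to mimic the proof of the implication $\mathrm{(ii)}\Rightarrow\mathrm{(i)}$ of Theorem~\ref{thmmain}, with the shift $S_N$ replaced by the $C_0$-model $A:=\bigoplus_{n=1}^{N}T_{\vartheta_n}$ acting on $\mathcal K_0:=\bigoplus_{n=1}^{N}\mathcal H(\vartheta_n)$. First I would fix invertible $Y_n\in\mathcal I(T_{\vartheta_n},T|_{\mathcal M_n})$, $n=1,\ldots,N$, and set $Y(h_1\oplus\cdots\oplus h_N)=\sum_{n=1}^{N}Y_n h_n$. The Cauchy--Schwarz estimate used in Theorem~\ref{thmmain} shows $Y$ is bounded, clearly $YA=TY$, and $Y\mathcal K_0=\mathcal H$ because $\mathcal H=\mathcal M_1+\cdots+\mathcal M_N$ and $Y_n\mathcal H(\vartheta_n)=\mathcal M_n$. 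Then $\ker Y\in\operatorname{Lat}A$; putting $\mathcal K:=\mathcal K_0\ominus\ker Y$, $R:=P_{\mathcal K}A|_{\mathcal K}$ and $Z:=Y|_{\mathcal K}$, I would check exactly as in Theorem~\ref{thmmain} that $ZR=TZ$, $\ker Z=\{0\}$ and $Z\mathcal K=\mathcal H$, so that $Z$ is a bounded bijection and $T\approx R$ by the bounded inverse theorem.

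Next I would verify that $R$ has the asserted structure. The decisive observation is that $\mathcal K$ is the orthogonal complement in $\mathcal K_0$ of the $A$-invariant subspace $\ker Y$, hence is invariant for $A^*$, so that $R^*=A^*|_{\mathcal K}$. Since $A$ is a finite direct sum of contractions of class $C_0$, it is a contraction of class $C_0$, and hence so is $A^*$; as a restriction of a $C_0$ operator to an invariant subspace is again of class $C_0$, the operator $R^*$, and therefore $R$, is of class $C_0$. For the defect count I would use the elementary identity $I-(B|_{\mathcal N})^*(B|_{\mathcal N})=P_{\mathcal N}(I-B^*B)|_{\mathcal N}$, valid for any contraction $B$ and any $\mathcal N\in\operatorname{Lat}B$, which gives $d_{B|_{\mathcal N}}\le d_B$; applied with $B=A^*$ and $\mathcal N=\mathcal K$ it yields $d_{R^*}\le d_{A^*}=\sum_{n=1}^{N}d_{(T_{\vartheta_n})^*}\le N$. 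Thus $R$ is a contraction of class $C_0$ with $d_{R^*}=:M\le N$, i.e.\ of class $C_0(M)$ with $M\le N$, and $T\approx R$, which is what is claimed.

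I do not expect a genuine obstacle: once $A=\bigoplus_{n}T_{\vartheta_n}$ is chosen as the substitute for $S_N$, the construction of $R$ with $T\approx R$ is a verbatim repetition of Theorem~\ref{thmmain}. The only mildly delicate point is to notice that the very invariant subspace $\ker Y$ that produces the similarity also makes $\mathcal K$ coinvariant for $A$, so that $R^*=A^*|_{\mathcal K}$; this single remark simultaneously yields that $R$ is of class $C_0$ and, through the monotonicity $d_{B|_{\mathcal N}}\le d_B$ of the first defect under restriction to invariant subspaces, that its defect indices do not exceed $N$.
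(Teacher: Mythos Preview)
Your proof is correct and follows essentially the same route as the paper. The paper identifies $A=\bigoplus_{n=1}^N T_{\vartheta_n}$ with the model operator $T_\Theta$ for the diagonal inner function $\Theta=\operatorname{diag}(\vartheta_1,\ldots,\vartheta_N)$ and then simply states that the compression $R=P_{\mathcal K}T_\Theta|_{\mathcal K}$ is of class $C_0(M)$ with $M\le N$ ``because $T_\Theta$ is a contraction of class $C_0(N)$''; your explicit verification via $R^*=A^*|_{\mathcal K}$ and the defect inequality $d_{B|_{\mathcal N}}\le d_B$ supplies exactly the details the paper leaves implicit.
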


\begin{proof} 
 Let 
$Y_n\in\mathcal I(T_{\vartheta_n},T|_{\mathcal M_n})$  be invertible,  $n=1,\ldots, N$. 
Let $h_n\in\mathcal H(\vartheta_n)$,  $n=1,\ldots, N$. 
Set \begin{equation*}Y(\oplus_{n=1}^N h_n)=\sum_{n=1}^NY_n h_n. \end{equation*}
The proof of the boundedness of $Y$ is the same as in the proof of Theorem \ref{thmmain}. 
Set \begin{equation*} \Theta=\left[\begin{matrix}\vartheta_1 & 0 & \ldots & 0 \\
0 & \vartheta_2 & \ldots & 0 \\ 
 \vdots & \vdots & \ddots & \vdots \\
0 & 0 & \ldots & \vartheta_N \end{matrix}\right].\end{equation*} 
Then $\mathcal H(\Theta)=\oplus_{n=1}^N  \mathcal H(\vartheta_n)$ and $T_\Theta=\oplus_{n=1}^N  T_{\vartheta_n}$. 
 The equations $Y  T_\Theta=TY$ and $Y \mathcal H(\Theta)=\mathcal H$ follow from the definition 
of $Y$ and the  assumption $\mathcal H=\mathcal M_1+\ldots+\mathcal M_N$. Furthermore, 
the equation $YT_\Theta=TY$ 
implies $\ker Y\in\operatorname{Lat}T_\Theta$. Set $\mathcal K= \mathcal H(\Theta)\ominus\ker Y$, 
$Z=Y|_{\mathcal K}$ and $R=P_{\mathcal K}T_\Theta|_{\mathcal K}$. Then $R$ is a contraction of class $C_0(M)$ 
with $M\leq N$, because $T_\Theta$ is a contraction of class $C_0(N)$. Furthermore, 
 $ZR=TZ$, $Z\mathcal K=Y \mathcal H(\Theta)=\mathcal H$ 
and $\ker Z=\{0\}$. Consequently, $T\approx R$. 
\end{proof}

The following lemma is a corollary of results from \cite{sznfbk} and \cite{berc}. This lemma is used in Example  \ref{exaunicellular}. 

\begin{lemma} \label{lemexa} Let $N\in\mathbb N$, and let $\Theta\in H^\infty(\mathbb C^N,\mathbb C^N)$ be inner. 
Denote by $\Theta^{Ad}$ the algebraic adjoint of $\Theta$. Let $\Theta^{Ad}$ be 
represented by a matrix $\Theta^{Ad}=[\theta_{nk}]_{n,k=1}^N$ with respect to some orthonormal basis of 
 $\mathbb C^N$. 
Furthermore, let $\vartheta\in H^\infty$ be inner. Then $T_\Theta\sim T_\vartheta$ if and only if $\det\Theta=\vartheta$ and 
the greatest common inner divisor of  $\theta_{nk}$, $n,k=1,\ldots,N$, is $1$. If $T_\Theta\approx T_\vartheta$, then  
\begin{equation*} \inf_{z\in\mathbb D}\sum_{n,k=1}^N|\theta_{nk}(z)|>0.  
\end{equation*}
\end{lemma}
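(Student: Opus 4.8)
The plan is to prove the two assertions separately, both being instances of the $C_0$-theory of \cite{sznfbk} and \cite{berc}. Since $\Theta$ is square and inner, it is inner from both sides, so $T_\Theta$ is a contraction of class $C_0$ with multiplicity at most $N$; hence $T_\Theta$ is quasisimilar to its Jordan operator $\oplus_{j=1}^{N}T_{d_j}$, where $d_1,\ldots,d_N$ are the invariant inner factors of $\Theta$, characterized by $d_1\mid\cdots\mid d_N$ and $d_1\cdots d_j$ being the greatest common inner divisor of the $j\times j$ minors of $\Theta$ (see \cite{berc}). The entries $\theta_{nk}$ of $\Theta^{Ad}$ are, up to unimodular constant factors, the $(N-1)\times(N-1)$ minors of $\Theta$, so the greatest common inner divisor of $\theta_{nk}$, $n,k=1,\ldots,N$, equals $d_1\cdots d_{N-1}$, and $\det\Theta=d_1\cdots d_N$. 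Because $T_\vartheta$ is its own Jordan operator and the Jordan operator is a complete quasisimilarity invariant within class $C_0$, the relation $T_\Theta\sim T_\vartheta$ holds if and only if $d_1=\cdots=d_{N-1}=1$ (so that $\oplus_{j}T_{d_j}\cong T_{d_N}$) and $d_N=\vartheta$, which is precisely the condition that $\det\Theta=\vartheta$ and that the greatest common inner divisor of the $\theta_{nk}$ equals $1$.

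For the similarity statement I may assume $N\geq2$, the case $N=1$ being trivial since then $\Theta^{Ad}=1$. Suppose $T_\Theta\approx T_\vartheta$. By the equivalence just proved, $\det\Theta=\vartheta$; hence $\Theta\Theta^{Ad}=\Theta^{Ad}\Theta=\vartheta I_N$ and $\det\Theta^{Ad}=\vartheta^{\,N-1}$. Applying the commutant lifting theorem (in its intertwining form) to an invertible $X\in\mathcal I(T_\vartheta,T_\Theta)$ and to $X^{-1}\in\mathcal I(T_\Theta,T_\vartheta)$, I obtain $A\in H^\infty(\mathbb C,\mathbb C^N)$ and $B\in H^\infty(\mathbb C^N,\mathbb C)$ with $Xh=P_{\mathcal H(\Theta)}Ah$ for $h\in\mathcal H(\vartheta)$, $X^{-1}f=P_{\mathcal H(\vartheta)}Bf$ for $f\in\mathcal H(\Theta)$, and $B\Theta=\vartheta B'$ for some $B'\in H^\infty(\mathbb C^N,\mathbb C)$. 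Moreover $A\vartheta=\Theta(\Theta^{Ad}A)\in\Theta H^2_N$. Using these divisibilities one checks that the parasitic summands drop out, so that $X^{-1}X=P_{\mathcal H(\vartheta)}(BA)|_{\mathcal H(\vartheta)}=(BA)(T_\vartheta)$ and $XX^{-1}f=P_{\mathcal H(\Theta)}(AB)f$ for $f\in\mathcal H(\Theta)$, where $AB$ is the $N\times N$ matrix product; since these equal the identities, there are $c\in H^\infty$ and $D\in H^\infty(\mathbb C^N,\mathbb C^N)$ with $BA=1+\vartheta c$ and $AB=I_N+\Theta D$.

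Multiplying $AB=I_N+\Theta D$ on the left by $\Theta^{Ad}$ gives $\Theta^{Ad}=vB-\vartheta D$ with $v:=\Theta^{Ad}A\in H^\infty(\mathbb C,\mathbb C^N)$, while multiplying $B\Theta=\vartheta B'$ on the right by $\Theta^{Ad}$ gives $B=B'\Theta^{Ad}$, whence $B'v=BA=1+\vartheta c$. Now I estimate $s(z):=\sum_{n,k=1}^{N}|\theta_{nk}(z)|$ for $z\in\mathbb D$ in two cases, with a threshold $\varepsilon\in(0,1]$ to be fixed. If $|\vartheta(z)|\geq\varepsilon$, then from $|\det\Theta^{Ad}(z)|=|\vartheta(z)|^{\,N-1}\geq\varepsilon^{\,N-1}$ and $|\det\Theta^{Ad}(z)|\leq N!\,(\max_{n,k}|\theta_{nk}(z)|)^{N}\leq N!\,s(z)^{N}$ one gets $s(z)\geq(\varepsilon^{\,N-1}/N!)^{1/N}$. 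If $|\vartheta(z)|<\varepsilon$, choosing $\varepsilon\leq(2\|c\|_\infty)^{-1}$ (or $\varepsilon=1$ if $c\equiv0$) gives $|B'(z)v(z)|=|1+\vartheta(z)c(z)|\geq1/2$, hence $\|v(z)\|$ is bounded below by a positive constant depending only on $\sup_{z}\|B'(z)\|$; since $v=\Theta^{Ad}A$ with $A$ bounded, $\|\Theta^{Ad}(z)\|$, and therefore $s(z)$, is bounded below by a positive constant. Combining the two cases yields $\inf_{z\in\mathbb D}\sum_{n,k=1}^{N}|\theta_{nk}(z)|>0$.

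The main obstacle is the second paragraph: obtaining the lifted multipliers $A$, $B$ in the stated form, with the divisibilities $A\vartheta\in\Theta H^2_N$ and $B\Theta\in\vartheta H^\infty(\mathbb C^N,\mathbb C)$ guaranteeing that $X^{-1}X$ and $XX^{-1}$ descend to compressions, equivalently to the $H^\infty$-functional calculi of $T_\vartheta$ and $T_\Theta$. This is exactly the lifting theorem together with the parametrization of intertwiners of model operators in \cite{sznfbk}. Once the identities $BA=1+\vartheta c$, $AB=I_N+\Theta D$, $B=B'\Theta^{Ad}$, $v=\Theta^{Ad}A$ are in hand, the desired lower bound is the short two-case pointwise estimate above, and in particular no appeal to the corona theorem is required.
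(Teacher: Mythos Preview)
Your proof is correct and follows essentially the same route as the paper: both lift $X$ and $X^{-1}$ to multipliers via \cite[Theorem VI.3.6]{sznfbk}, derive the scalar identity $B'\Theta^{Ad}A=1+\vartheta c$ (the paper's $\Psi_2\Theta^{Ad}\Psi_1=1+\psi_0\vartheta$), and combine it with $\det\Theta^{Ad}=\vartheta^{\,N-1}$ to conclude. Your detour through $AB=I_N+\Theta D$ and $\Theta^{Ad}=vB-\vartheta D$ is harmless but unused, since the final estimate only needs $B'v=1+\vartheta c$; and your two-case split (small vs.\ large $|\vartheta(z)|$) is just a direct rephrasing of the paper's contradiction argument.
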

 
\begin{proof} The statement on the quasisimilarity $T_\Theta\sim T_\vartheta$ is contained in {\cite[Theorem X.6.6]{sznfbk}} (see also {\cite[Proposition VI.5.11]{berc}}), 
so its proof is omitted. Let $T_\Theta\approx T_\vartheta$. Let $X\in\mathcal I(T_\vartheta,T_{\Theta})$ 
be  invertible. By {\cite[Theorem VI.3.6]{sznfbk}}, there exist
$\Psi_1$, $\Psi_3\in  H^\infty(\mathbb C,\mathbb C^N)$ and 
$\Psi_2$, $\Psi_4\in  H^\infty(\mathbb C^N,\mathbb C)$ 
such that $Xf=P_{\mathcal H(\Theta)}\Psi_1 f$
 for every $f \in\mathcal H(\vartheta)$, 
 $X^{-1}h=P_{\mathcal H(\vartheta)}\Psi_4 h$
 for every $h \in\mathcal H(\Theta)$, $\Psi_1\vartheta=\Theta\Psi_3$, and $\Psi_4\Theta=\vartheta\Psi_2$. 
Since $\Theta^{Ad}\Theta = \Theta\Theta^{Ad}=(\det\Theta)\cdot I_{\mathbb C^N}=\vartheta I_{\mathbb C^N}$, we obtain
  $\Psi_4=\Psi_2\Theta^{Ad}$. For every $f\in\mathcal H(\vartheta)$ we have  
\begin{align*} f&=X^{-1}Xf=P_{\mathcal H(\vartheta)}\Psi_4 P_{\mathcal H(\Theta)}\Psi_1 f=
P_{\mathcal H(\vartheta)}\Psi_2\Theta^{Ad} P_{\mathcal H(\Theta)}\Psi_1 f \\ &= 
P_{\mathcal H(\vartheta)}\Psi_2\Theta^{Ad}(I_{H^2_N}- P_{\Theta H^2_N})\Psi_1 f= 
P_{\mathcal H(\vartheta)}\Psi_2\Theta^{Ad}\Psi_1 f.
\end{align*}
Therefore, there exists $\psi_0\in H^\infty$ such that $\Psi_2\Theta^{Ad}\Psi_1=1+\psi_0\vartheta$. Denote by 
$\psi_{1n}$ and $\psi_{2n}$, $n=1,\ldots, N$, the elements of the column $\Psi_1$ and the row $\Psi_2$, respectively. 
Then  $\Psi_2\Theta^{Ad}\Psi_1=\sum_{n,k=1}^N\theta_{nk}\psi_{1k}\psi_{2n}=  1+\psi_0\vartheta$. Consequently, 
 \begin{equation*} \inf_{z\in\mathbb D}\Bigl(\sum_{n,k=1}^N|\theta_{nk}(z)|+|\vartheta(z)|\Bigr)>0.  
\end{equation*}
Since $\det\Theta^{Ad}=\vartheta^{N-1}$, we have 
 \begin{equation*} |\vartheta(z)|^{N-1}=|\det\Theta^{Ad}(z)|\leq N!\Bigl( \max_{n,k=1,\ldots, N}|\theta_{nk}(z)|\Bigr)^N, \ \ z\in\mathbb D. 
\end{equation*}
Consequently, if $ \inf_{z\in\mathbb D}\sum_{n,k=1}^N|\theta_{nk}(z)|=0$, then 
$\inf_{z\in\mathbb D}(\sum_{n,k=1}^N|\theta_{nk}(z)|+|\vartheta(z)|)=0$, a contradiction. 
\end{proof} 

\begin{example} \label{exaunicellular} Let $\vartheta_1$, $\vartheta_2\in H^\infty$ be two inner functions, and let $c\in\mathbb C$ be such that 
$|c|^2=\frac{1}{1+|\vartheta_1(0)|^2}$. Set  $\chi(z)=z$, $z\in\mathbb D$, and 
\begin{equation}\label{exaexp}
\Theta=\left[\begin{matrix} \frac{c\vartheta_2}{2}\frac{(1-\chi)\vartheta_1-(1+\chi)\vartheta_1(0)}{\chi} & 
\frac{c}{2}\frac{(1+\chi)\vartheta_1-(1-\chi)\vartheta_1(0)}{\chi} \\ \empty &\empty \\
\frac{\overline c\vartheta_2}{2}(1+\chi +(1-\chi)\vartheta_1\overline{\vartheta_1(0)}) & 
\frac{\overline c}{2}(1-\chi+(1+\chi)\vartheta_1\overline{\vartheta_1(0)})\end{matrix}\right].
\end{equation}
Clearly,  $\Theta\in H^\infty(\mathbb C^2,\mathbb C^2)$. 
Straightforward calculation shows that $\Theta^*(\zeta)\Theta(\zeta)=I_{\mathbb C^2}$ for a.e. $\zeta\in \mathbb T$. Consequently, 
$\Theta$ is an inner function (see the references in Introduction). We have $\det \Theta=-\vartheta_1\vartheta_2$. 
Denote the elements of the matrix $\Theta^{Ad}$ by $\theta_{nk}$, $n,k=1,2$. 
Then  $\Theta=\left[\begin{matrix}\theta_{22} & -\theta_{12}\\ -\theta_{21} & \theta_{11}\end{matrix}\right]$ and 
$\Theta^{Ad}=\left[\begin{matrix}\theta_{11} & \theta_{12}\\ \theta_{21} & \theta_{22}\end{matrix}\right]$. 
If there exists a sequence $\{z_l\}_l\subset\mathbb D$
 such that $z_l\to 1$ and  $\vartheta_k(z_l)\to 0$ for $k=1,2$, then 
\begin{equation*} \inf_{z\in\mathbb D}\sum_{n,k=1}^2|\theta_{nk}(z)|=0.  
\end{equation*}
For $a>0$ set 
\begin{equation}\label{defexp}\alpha_a(z)=\exp\Bigl(a\frac{z+1}{z-1}\Bigr), \ \  z\in\mathbb D. 
\end{equation} 
Now let $a_k>0$,  and let $\vartheta_k(z)=\alpha_{a_k}$, $k=1,2$. Then $\vartheta_k(z)\to 0$ for $k=1,2$,
when $z\in(0,1)$, $z\to 1$. 
Furthermore, 
the greatest common inner divisor of  $\theta_{nk}$, $n,k=1,2$, is $1$. Indeed, if $\alpha$ is  
the greatest common inner divisor of  $\theta_{nk}$, $n,k=1,2$, then $\alpha$ is a divisor of 
$\det\Theta=-\alpha_{a_1+a_2}$. Consequently,  $\alpha=\alpha_{a_0}$ for some $a_0>0$. 
In particular, $\theta_{11}=\alpha_{a_0}g_0$ for some $g_0\in H^\infty$. 
By definition, \begin{equation*} \theta_{11} =
\frac{\overline c}{2}(1-\chi+(1+\chi)\vartheta_1\overline{\vartheta_1(0)})=
\frac{\overline c}{2}(1-\chi+(1+\chi)\alpha_{a_1}\overline{\alpha_{a_1}(0)}).
\end{equation*}
It follows   $1-\chi=\alpha_{\min(a_1,a_0)}g$ for some $g\in H^\infty$, a contradiction. 

Thus, if $\Theta$ is defined by \eqref{exaexp} with 
 $\vartheta_k(z)=\alpha_{a_k}$ for some $a_k>0$, $k=1,2$, where  $\alpha_{a_k}$ are defined by \eqref{defexp}, 
then Lemma  \ref{lemexa} and the consideration above imply that $T_\Theta\sim T_{\alpha_{a_1+a_2}}$ and 
$T_\Theta\not\approx T_{\alpha_{a_1+a_2}}$. By {\cite[Exersices IV.1.13 and IV.1.14]{berc}}, $T_\Theta$ is unicellular (because $T_\Theta\sim T_{\alpha_a}$ for some $a>0$; see also {\cite[Sec. III.7.2]{sznfbk}}, {\cite[Sec. IV.3]{nik86}} and 
{\cite[Proposition VII.1.21]{berc}}). 
\end{example}

\section{On operators of class $C_0$ whose minimal function is a Carleson--Newman product}

Remember the definitions. 
For $\lambda\in\mathbb D$ set 
$b_\lambda(z)= \frac{|\lambda|}{\lambda}\frac{\lambda-z}{1-\overline\lambda z}$, $z\in\mathbb D$, 
if $\lambda\neq 0$,  and $b_\lambda(z)=z$, $z\in\mathbb D$, if $\lambda=0$.
It is well known that if $\{\lambda_n\}_n\subset\Bbb D$, then  the product $B=\prod_nb_{\lambda_n}$  converges if and only if 
 $\sum_n(1-|\lambda_n|)<\infty$ (that is, $\{\lambda_n\}_n$ satisfies the \emph{Blaschke condition}). The product 
 $B$ is called a \emph{Blaschke product}. 
Let a sequence $\{\lambda_n\}_n\subset\Bbb D$ be such that  $\lambda_n\neq\lambda_k$, if $n\neq k$, 
and  $\sum_n(1-|\lambda_n|)<\infty$.  
  Set $B_n=\prod_{k\neq n}b_{\lambda_k}$. The sequence $\{\lambda_n\}_n$ 
satisfies the \emph{Carleson interpolating condition} (the Carleson condition for brevity), if $\inf_n|B_n(\lambda_n)|>0$. 
A \emph{Carleson--Newman product} is a finite 
product of Blaschke products with simple zeros satisfying the Carleson condition.

\begin{lemma}\label{lemblaschke} Let  $\Lambda\subset\mathbb D$ satisfy  
the  Blaschke  condition and the Carleson condition. 
  Set $B=\prod_{\lambda\in\Lambda}b_\lambda$. Let $T$ be a polynomially bounded operator of class $C_0$. Suppose that 
$B$ is the minimal  function of $T$ and $M=\mu_T<\infty$. 
Set
\begin{align*} \Lambda_n& =\{\lambda\in\Lambda \ :\ \dim\ker (T-\lambda I)\geq n\}, 
\ \   B_n=\prod_{\lambda\in\Lambda_n}b_\lambda, \ \ n\in\mathbb N, \\ \text{ and } 
\Theta&=\left[ \begin{matrix} B_1  & 0 & \ldots & 0 \\ 
0 & B_2 & \ldots & 0\\ 
\vdots & \vdots &\ddots &\vdots \\
 0 & 0 & \ldots & B_M
 \end{matrix}\right]. 
\end{align*} 
Then $T\approx T_\Theta$. 
\end{lemma}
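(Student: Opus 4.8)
\textbf{Proof plan for Lemma \ref{lemblaschke}.}

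The plan is to exhibit an explicit quasiaffinity from $T$ onto $T_\Theta$, then upgrade quasisimilarity to similarity using the Carleson condition. First I would recall the $C_0$ model theory: since $T$ is of class $C_0$ with $\mu_T=M<\infty$, by the Jordan model theorem (\cite[Theorem III.5.1]{sznfbk} or \cite[Sec. III.2]{berc}) $T$ is quasisimilar to its Jordan model $\oplus_{n=1}^M T_{\psi_n}$, where $\psi_1\mid\psi_2\mid\cdots$ is the sequence of inner divisors, $\psi_1$ is the minimal function of $T$, hence $\psi_1 = B$. The key identification is that for a $C_0$ contraction whose minimal function is a Blaschke product with simple zeros, the $n$th Jordan invariant $\psi_n$ is exactly the Blaschke product over those zeros $\lambda$ of $B$ at which $\dim\ker(T-\lambda I)\geq n$; that is, $\psi_n = B_n$. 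I would justify this by passing to the model space $\mathcal H(\Theta_0)$ for the characteristic function $\Theta_0$ of $T$ (or directly from the Jordan model), noting that the zeros of $\det\Theta_0$ counted with multiplicity-as-eigenvalue-spaces determine the $B_n$, and that $\prod_n B_n$ equals $\det\Theta_0 = \prod_n\psi_n$ up to a unimodular constant. So $T\sim T_\Theta$ with $\Theta$ as displayed.

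The remaining and genuinely substantive step is to improve $T\sim T_\Theta$ to $T\approx T_\Theta$, and this is where the Carleson hypothesis enters. The standard route: by $C_0$ theory there are quasiaffinities $X\in\mathcal I(T,T_\Theta)$ and $X'\in\mathcal I(T_\Theta,T)$ with dense range and trivial kernel; it suffices to show one of them is bounded below. I would invoke the characterization of similarity within a $C_0$ quasisimilarity class in terms of the model: since $T_\Theta = \oplus_n T_{B_n}$ is a direct sum of Jordan blocks $T_{B_n}$ with each $B_n$ a Blaschke product satisfying the Carleson condition, each $T_{B_n}$ is similar to a normal (diagonal) operator — indeed $\mathcal H(b_\lambda)$ is one-dimensional and, under the Carleson condition, the reproducing kernels $\{k_\lambda/\|k_\lambda\|\}_{\lambda\in\Lambda_n}$ form a Riesz basis of $\mathcal H(B_n)$, so $T_{B_n}$ is similar to the diagonal operator $\operatorname{diag}(\lambda)_{\lambda\in\Lambda_n}$. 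Thus $T_\Theta$ is similar to a normal operator $\mathcal N$ with spectrum $\overline\Lambda\cup(\text{support of the singular part})$ — but here there is no singular part, so $\mathcal N$ is diagonal with finite uniform multiplicity. Finally, an operator quasisimilar to a diagonalizable normal operator of this type, being polynomially bounded (which $T$ is, by hypothesis) and of class $C_0$, is in fact similar to it: this is the content of the Sz.-Nagy--Foiaş–type results linking Carleson--Newman products to similarity (cf. the discussion in the Introduction and \cite{clouatre11,clouatre14,clouatre15} for the multiplicity-one case), and the general finite-multiplicity version follows by applying the scalar case coordinatewise after reducing to the model $\oplus_n T_{B_n}$.

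Concretely, the argument I would write runs: (1) reduce to showing $T\approx\oplus_{n=1}^M T_{B_n}$; (2) since each $B_n$ is a Carleson--Newman product (a divisor of $B$, with simple zeros satisfying the Carleson condition — here I use that a sub-Blaschke-product of a Carleson interpolating sequence is again Carleson interpolating, because the quantities $|B_{n,\lambda}(\lambda)|$ only increase when factors are deleted), each $T_{B_n}$ is similar to $\operatorname{diag}(\lambda\colon\lambda\in\Lambda_n)$; (3) hence $T\sim\mathcal N$ for a diagonal normal $\mathcal N$; and (4) conclude $T\approx\mathcal N\approx T_\Theta$ using that a polynomially bounded operator quasisimilar to such an $\mathcal N$ is similar to it (via the Riesz-basis structure: the intertwining quasiaffinity maps reproducing kernels to the corresponding eigenvectors, and both families are Riesz bases, forcing the intertwiner to be bounded below). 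The main obstacle is step (4) — making the jump from quasisimilarity to similarity rigorous; the honest way is to keep track of the explicit intertwiners from the $C_0$ functional model (\cite[Theorem VI.3.6]{sznfbk}, as used in the proof of Lemma \ref{lemexa}) rather than appeal to a black box, and to verify that under the Carleson condition the algebraic-adjoint matrix entries of $\Theta$ (here simply the diagonal cofactors $\prod_{k\neq n}B_k$) have no common inner divisor and satisfy the lower bound $\inf_{z\in\mathbb D}\sum|\theta_{nk}(z)|>0$, which by the converse direction implicit in Lemma \ref{lemexa}'s circle of ideas yields the required invertibility.
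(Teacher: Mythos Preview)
Your plan takes a detour through the Jordan model and then tries to upgrade quasisimilarity to similarity, and it is precisely that upgrade --- your step~(4) --- that is not established. You write that ``the intertwining quasiaffinity maps reproducing kernels to the corresponding eigenvectors, and both families are Riesz bases, forcing the intertwiner to be bounded below''. But for the abstract operator $T$ you do not yet know that the eigenvectors form a Riesz basis; that statement is \emph{equivalent} to $T\approx R$, which is what you are trying to prove. Your alternative route through ``the converse direction implicit in Lemma~\ref{lemexa}'' does not work either: that lemma gives only a \emph{necessary} condition for $T_\Theta\approx T_\vartheta$ with scalar $\vartheta$, and says nothing about similarity of an abstract $T$ to $T_\Theta$. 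Finally, your suggestion to apply ``the scalar case coordinatewise after reducing to the model $\oplus_n T_{B_n}$'' is circular --- the reduction \emph{is} the similarity $T\approx\oplus_n T_{B_n}$ you are after.

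The paper's proof avoids all of this by invoking directly a result you seem unaware is available in full generality: if $T$ is a polynomially bounded operator of class $C_0$ whose minimal function is a Blaschke product $B$ with simple zeros satisfying the Carleson condition, then $T\approx R:=\oplus_{\lambda\in\Lambda}\lambda I_{\mathbb C^{k(\lambda)}}$ where $k(\lambda)=\dim\ker(T-\lambda I)$. No assumption $\mu_T=1$ is needed; see for instance the proof of Theorem~2.1 in \cite{cassierest}, Corollary~3.3 in \cite{clouatre11}, or Lemma~2.3 in \cite{vitse}. Once this is granted, the lemma is a two-line rearrangement: $k(\lambda)\le\mu_R=\mu_T=M$ (so the product defining $\Theta$ terminates), and the same black box applied to each $T_{B_n}$ gives $T_\Theta\cong\oplus_{n=1}^M T_{B_n}\approx\oplus_{n=1}^M\oplus_{\lambda\in\Lambda_n}\lambda I_{\mathbb C}=R$, whence $T\approx T_\Theta$. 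Your identification $\psi_n=B_n$ via Jordan-model theory is correct but unnecessary; the whole quasisimilarity layer can be skipped.
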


\begin{proof} Set $k(\lambda)=\dim\ker (T-\lambda I)$, $\lambda\in\Lambda$, and  
 \begin{equation*} R=\oplus_{\lambda\in\Lambda}\lambda I_{\mathbb C^{k(\lambda)}}.
\end{equation*} 
It is well known (see, for example, {\cite[proof of Theorem 2.1]{cassierest}}, {\cite[Corollary 3.3]{clouatre11}}, 
{\cite[Lemma 2.3]{vitse}})
that $T\approx R$. Since $\ker (R-\lambda I)$ is a reducing subspace for $R$ and $\mu_{R|_{\ker (R-\lambda I)}}=k(\lambda)$, 
we have $k(\lambda)\leq \mu_R=\mu_T=M$. 

Since $T_{B_n}\approx\oplus_{\lambda\in\Lambda_n}\lambda I_{\mathbb C}$ and 
$T_\Theta\cong \oplus_{n=1}^M T_{B_n}$, we have  
\begin{equation*} T_\Theta\approx \oplus_{n=1}^M\oplus_{\lambda\in\Lambda_n}\lambda I_{\mathbb C}=R.
\end{equation*} 
Thus, $T\approx T_\Theta$.  
\end{proof}

In the next two lemmas, one deals with  a triangulation of an operator $T$ of the form \eqref{triang} just below, 
 which is given before the formulations of the lemmas to avoid repetition: 
\begin{equation}\label{triang} T=\left[ \begin{matrix} T_1  & * & \ldots & * & * \\ 
\mathbb O & T_2 & \ldots & * & * \\ \vdots & \vdots & \ddots & \vdots & \vdots \\
\mathbb O & \mathbb O & \ldots & T_{N-1} & * \\ \mathbb O & \mathbb O & \ldots & \mathbb O & T_N
 \end{matrix}\right]. 
\end{equation}

\begin{lemma} \label{lemthetacc0}   Let $N\in\mathbb N$,  and let $\vartheta_n$, $n=1,\ldots, N$, be inner functions. 
 Set $\vartheta=\prod_{n=1}^N \vartheta_n$. Suppose that  $T$ is an a.c. continuous polynomially bounded operator 
and $\vartheta(T)=\mathbb O$. Then $T$ has a triangulation of the form \eqref{triang},  
where $\vartheta_n(T_n)=\mathbb O$,  $n=1,\ldots, N$. 
\end{lemma}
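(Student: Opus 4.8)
The plan is to proceed by induction on $N$, the number of inner factors, the case $N=1$ being trivial (take $T_1=T$). So suppose $N\geq 2$ and write $\vartheta=\vartheta_1\psi$ where $\psi=\prod_{n=2}^N\vartheta_n$. The key step is to produce a single hyperinvariant (in any case, invariant) subspace $\mathcal M\in\operatorname{Lat}T$ on which $T$ "absorbs" the factor $\vartheta_1$ from one side and $\psi$ from the other. Concretely, I would set
\begin{equation*}
\mathcal M=\operatorname{clos}\vartheta_1(T)\mathcal H.
\end{equation*}
Since $T$ is a.c.\ polynomially bounded, the $H^\infty$-functional calculus is multiplicative, so $\vartheta_1(T)$ commutes with $T$ and hence $\mathcal M\in\operatorname{Lat}T$. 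Because $\vartheta(T)=\vartheta_1(T)\psi(T)=\psi(T)\vartheta_1(T)=\mathbb O$, we get $\psi(T)\vartheta_1(T)\mathcal H=\{0\}$, and by continuity $\psi(T)|_{\mathcal M}=\mathbb O$; thus the compression (in fact restriction) $T|_{\mathcal M}$ is annihilated by $\psi=\prod_{n=2}^N\vartheta_n$. On the quotient side, $T$ induces an operator on $\mathcal H\ominus\mathcal M$ (the compression $P_{\mathcal H\ominus\mathcal M}T|_{\mathcal H\ominus\mathcal M}$, which is triangular with respect to $\mathcal H=\mathcal M\oplus(\mathcal H\ominus\mathcal M)$), and since $\mathcal H\ominus\mathcal M=\ker\vartheta_1(T)^*$ is $T^*$-invariant, on this quotient $\vartheta_1$ of the compressed operator vanishes: indeed for $x\perp\mathcal M$ one has $\vartheta_1(T)x\in\mathcal M$, so $P_{\mathcal H\ominus\mathcal M}\vartheta_1(T)x=0$, and the functional calculus of a compression of $T$ to a semi-invariant subspace is the compression of the functional calculus (this is where a.c.\ polynomial boundedness is used, via the Sz.-Nagy--Foias-type lifting/compression property of the $H^\infty$ calculus). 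Hence the $2\times 2$ block triangulation
\begin{equation*}
T=\left[\begin{matrix} T|_{\mathcal M} & * \\ \mathbb O & T' \end{matrix}\right]
\end{equation*}
has $\psi(T|_{\mathcal M})=\mathbb O$ and $\vartheta_1(T')=\mathbb O$.

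Now I apply the inductive hypothesis to $T|_{\mathcal M}$, which is again a.c.\ polynomially bounded (a restriction of such an operator to an invariant subspace is a.c.\ polynomially bounded with the same constant), annihilated by the product $\psi=\vartheta_2\cdots\vartheta_N$ of $N-1$ inner functions: it admits a triangulation of the form \eqref{triang} of size $N-1$ with diagonal blocks $T_2,\dots,T_N$ satisfying $\vartheta_n(T_n)=\mathbb O$. Setting $T_1:=T'$ (which satisfies $\vartheta_1(T_1)=\mathbb O$) and refining the above $2\times 2$ block decomposition using this $(N-1)$-step triangulation of $\mathcal M$, I obtain the desired $N$-step upper triangulation \eqref{triang} of $T$ with $\vartheta_n(T_n)=\mathbb O$ for all $n=1,\dots,N$. (A small bookkeeping point: the excerpt's convention puts $T_1$ in the top-left corner; above I placed $T'$ there, which matches since $T'$ carries the factor $\vartheta_1$. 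One may instead split off $\vartheta_N$ first via $\mathcal M=\operatorname{clos}\vartheta_N(T)\mathcal H$ if one prefers the indices to come out in the other order — either way works.)

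The main obstacle I anticipate is the justification that the functional calculus of the compression $T'=P_{\mathcal H\ominus\mathcal M}T|_{\mathcal H\ominus\mathcal M}$ is the compression of the functional calculus of $T$, so that $\vartheta_1(T)=\mathbb O$ on the relevant piece forces $\vartheta_1(T')=\mathbb O$. For contractions this is classical (Sz.-Nagy--Foias, compressions to semi-invariant subspaces commute with the $H^\infty$ calculus), and for a.c.\ polynomially bounded operators it follows from the corresponding property of the a.c.\ $H^\infty$ calculus established via similarity-to-contraction-on-invariant-subspace arguments / the work of Mlak (see the references \cite{mlak}, \cite{ker16} cited in the Introduction); since $\mathcal H\ominus\mathcal M$ is actually a $T^*$-invariant (hence $T$-semi-invariant of the simplest kind) subspace, the statement reduces to: for $\mathcal N\in\operatorname{Lat}T^*$, $p(P_{\mathcal N}T|_{\mathcal N})=P_{\mathcal N}p(T)|_{\mathcal N}$ for polynomials $p$, and then pass to the $H^\infty$ limit using a.c.\ polynomial boundedness. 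The rest of the argument is the routine induction and block-matrix bookkeeping sketched above.
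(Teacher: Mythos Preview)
Your argument is correct and follows the same inductive scheme as the paper. The only real difference is the choice of splitting subspace: the paper takes $\mathcal H_1=\ker\vartheta_1(T)$ (so that $\vartheta_1(T_1)=\mathbb O$ is immediate and one then checks, via the adjoint, that $\vartheta_2(T_2)=\mathbb O$ on $\mathcal H_2=\operatorname{clos}\widetilde\vartheta_1(T^*)\mathcal H$), whereas you take $\mathcal M=\operatorname{clos}\vartheta_1(T)\mathcal H$ (so that $\psi(T|_{\mathcal M})=\mathbb O$ is immediate and one then checks that $\vartheta_1$ kills the compression to $\mathcal H\ominus\mathcal M$). These subspaces are in general different ($\operatorname{clos}\vartheta_N(T)\mathcal H\subset\ker(\vartheta_1\cdots\vartheta_{N-1})(T)$ but not conversely), yet each yields a valid $2\times 2$ triangulation with the desired annihilation, so both feed the same induction.

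One small correction to your bookkeeping: in your displayed $2\times 2$ block matrix, $T'$ sits in the \emph{bottom-right} corner, not the top-left, so your first construction actually produces the diagonal order $T_2,\ldots,T_N,T_1$. Your parenthetical fix---peel off $\vartheta_N$ via $\mathcal M=\operatorname{clos}\vartheta_N(T)\mathcal H$ and apply the inductive hypothesis to $T|_{\mathcal M}$---does give the order $T_1,\ldots,T_N$ required by \eqref{triang}, and is exactly parallel to how the paper organizes its induction (splitting $\vartheta=(\prod_{n=1}^{N-1}\vartheta_n)\cdot\vartheta_N$). The justification you flag as a possible obstacle (that $\varphi(T')=P_{\mathcal H\ominus\mathcal M}\varphi(T)|_{\mathcal H\ominus\mathcal M}$ for $\varphi\in H^\infty$ when $\mathcal H\ominus\mathcal M$ is $T^*$-invariant) is indeed routine: it holds for polynomials since $(T')^*=T^*|_{\mathcal H\ominus\mathcal M}$, and passes to $H^\infty$ by the a.c.\ calculus.
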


\begin{proof} The lemma will be proved using induction. Let $N=2$. Denote by $\mathcal H$ the space on which $T$ acts. 
Set $\mathcal H_1=\ker\vartheta_1(T)$, $T_1=T|_{\mathcal H_1}$,
 $\mathcal H_2=\mathcal H\ominus\mathcal H_1$, and $T_2=P_{\mathcal H_2}T|_{\mathcal H_2}$.  
Since 
\begin{equation*}\mathcal H_2=\operatorname{clos}(\vartheta_1(T))^*\mathcal H
=\operatorname{clos}\widetilde\vartheta_1(T^*)\mathcal H
\end{equation*} 
and 
\begin{equation*} \{0\}=\operatorname{clos}\widetilde\vartheta_2(T^*)\widetilde\vartheta_1(T^*)\mathcal H =
\operatorname{clos}\widetilde\vartheta_2(T^*)\mathcal H_2,
\end{equation*} 
we conclude that $\widetilde\vartheta_2(T^*)|_{\mathcal H_2}=\mathbb O$. Consequently,  
$\mathbb O= (\widetilde\vartheta_2(T^*)|_{\mathcal H_2})^*=\vartheta_2(T_2)$. 

If $N>2$, assume that the lemma is proved for $N-1$. Apply the case $N=2$ to  $T$ and  the representation  
 $\vartheta=(\prod_{n=1}^{N-1}\vartheta_n)\cdot\vartheta_N$. 
Then \begin{equation*} T= \left[ \begin{matrix} R  & * \\ \mathbb O & T_N
 \end{matrix}\right], 
\end{equation*} 
where $(\prod_{n=1}^{N-1}\vartheta_n)(R)=\mathbb O$ and $\vartheta_N(T_N)=\mathbb O$. 
The conclusion of the lemma follows from the obtained representation of $T$ and  the inductive hypothesis applied to $R$. 
\end{proof}

The proof of the following lemma is based on {\cite[Lemma 1]{takshift}}.

\begin{lemma} \label{lemisometry}  Suppose that $N\in\mathbb N$, 
  $T\in\mathcal L(\mathcal H)$ is a contraction, and $T$ has a triangulation of the form \eqref{triang}. 
Denote by $\mathcal H_n$ the space on which $T_n$ acts, $n=1,\ldots, N$. 
Suppose that for every $n$, $n=1,\ldots, N$, there exist $M_n\in\mathbb N$ and  
  $Y_n\in\mathcal I(S_{M_n},T_n)$ such that $Y_n H^2_{M_n}=\mathcal H_n$. Set $M=\sum_{n=1}^N M_n$. 
Then there exists 
$Y\in\mathcal I(S_M,T)$ such that $Y H^2_M=\mathcal H$. 
\end{lemma}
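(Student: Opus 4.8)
The plan is to induct on $N$, peeling off the last diagonal block and reducing everything to a commutant‑lifting statement for $N=2$.

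First, for $N=1$ there is nothing to prove (take $Y=Y_1$), so let $N\ge 2$ and assume the lemma for $N-1$. The triangulation \eqref{triang} means there is an increasing chain $\{0\}=\mathcal E_0\subset\mathcal E_1\subset\cdots\subset\mathcal E_N=\mathcal H$ of $T$‑invariant subspaces with $\mathcal H_n=\mathcal E_n\ominus\mathcal E_{n-1}$ and $T_n=P_{\mathcal H_n}T|_{\mathcal H_n}$. Since $\mathcal E_{N-1}$ is invariant, with respect to $\mathcal H=\mathcal E_{N-1}\oplus\mathcal H_N$ we have $T=\left[\begin{smallmatrix} R & B \\ \mathbb O & T_N\end{smallmatrix}\right]$, where $R=T|_{\mathcal E_{N-1}}$ is a contraction whose triangulation along $\mathcal E_0\subset\cdots\subset\mathcal E_{N-1}$ has diagonal blocks $T_1,\dots,T_{N-1}$, the subspace $\mathcal H_N$ is co‑invariant for $T$ (that is, $T^*\mathcal H_N\subseteq\mathcal H_N$), and $T_N=P_{\mathcal H_N}T|_{\mathcal H_N}$. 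Applying the inductive hypothesis to $R$ with the data $M_1,\dots,M_{N-1}$ and $Y_1,\dots,Y_{N-1}$ yields $Y'\in\mathcal I(S_{M'},R)$ with $Y'H^2_{M'}=\mathcal E_{N-1}$, where $M'=M_1+\cdots+M_{N-1}$.

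It now suffices to produce a \emph{bounded} $\widehat Y\in\mathcal I(S_{M_N},T)$ with $P_{\mathcal H_N}\widehat Y=Y_N$. Indeed, writing $\widehat Y=\left[\begin{smallmatrix} Z \\ Y_N\end{smallmatrix}\right]$ with respect to $\mathcal H=\mathcal E_{N-1}\oplus\mathcal H_N$, the identity $\widehat Y S_{M_N}=T\widehat Y$ is equivalent to the Sylvester‑type equation $ZS_{M_N}-RZ=BY_N$ (its second row being the given relation $Y_N S_{M_N}=T_N Y_N$). Granting such a $Z$, set $M=M'+M_N=\sum_{n=1}^N M_n$ and define $Y\colon H^2_{M'}\oplus H^2_{M_N}\to\mathcal H$ by $Y(h'\oplus h_N)=(Y'h'+Zh_N)\oplus Y_N h_N$. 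Then $Y$ is bounded, a direct computation gives $YS_M=TY$, and $YH^2_M=\mathcal H$: given $\xi'\oplus\xi_N\in\mathcal E_{N-1}\oplus\mathcal H_N$ one first chooses $h_N$ with $Y_N h_N=\xi_N$ and then $h'$ with $Y'h'=\xi'-Zh_N$.

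The existence of $\widehat Y$ is a commutant‑lifting fact: $S_{M_N}$ is an isometry, $T$ is a contraction, $\mathcal H_N$ is co‑invariant for $T$, $T_N=P_{\mathcal H_N}T|_{\mathcal H_N}$, and $Y_N\in\mathcal I(S_{M_N},T_N)$, so $Y_N$ lifts to an intertwiner $\widehat Y\in\mathcal I(S_{M_N},T)$ compressing back to $Y_N$ (one may even keep $\|\widehat Y\|=\|Y_N\|$); this is the point at which {\cite[Lemma 1]{takshift}} is invoked. The only non‑formal ingredient is precisely the boundedness of $Z$, equivalently the norm control in the lifting step: formally $Z=\sum_{k\ge 0}R^k(BY_N)S_{M_N}^{*(k+1)}$ solves $ZS_{M_N}-RZ=BY_N$, and each term has norm at most $\|BY_N\|$ and, evaluated at a fixed vector, tends to $0$ because $S_{M_N}^*$ is of class $C_{\cdot 0}$, but the series need not converge absolutely, and convergence genuinely uses the contractivity of the \emph{whole} operator matrix $\left[\begin{smallmatrix} R & B \\ \mathbb O & T_N\end{smallmatrix}\right]$ rather than merely of $R$. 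I expect this convergence/boundedness issue to be the main technical obstacle, and I would quote {\cite[Lemma 1]{takshift}} for it rather than reprove it.
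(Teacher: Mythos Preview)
Your argument is correct and essentially identical to the paper's: both induct on $N$ by peeling off the last block, apply the inductive hypothesis to the upper-left $(N-1)\times(N-1)$ corner $R$, and invoke {\cite[Lemma 1]{takshift}} to produce the off-diagonal piece $Z$ solving $ZS_{M_N}-RZ=BY_N$, after which $Y=\left[\begin{smallmatrix} Y' & Z\\ \mathbb O & Y_N\end{smallmatrix}\right]$ does the job. The only cosmetic difference is that the paper first isolates the $N=2$ case and then reduces $N>2$ to it, whereas you fold the $N=2$ argument directly into the inductive step; your extra discussion of the formal series for $Z$ and why contractivity of the full matrix is needed is accurate but not required once Takahashi's lemma is quoted.
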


\begin{proof} The lemma will be proved using induction. Let $N=2$. Then $T$  has the form 
$T= \left[ \begin{matrix} T_1  & A \\ \mathbb O & T_2 \end{matrix}\right]$ for some transformation $A$.
  By   {\cite[Lemma 1]{takshift}} applied to $T_2$ and $Y_2$, 
there exists $Z\in\mathcal L(H^2_{M_2},\mathcal H_1)$ such that 
 \begin{equation*}  
\left[ \begin{matrix} I_{\mathcal H_1}  & Z \\ \mathbb O & Y_2 \end{matrix}\right] 
\left[ \begin{matrix} T_1  & \mathbb O \\ \mathbb O & S_{M_2} \end{matrix}\right] 
= \left[ \begin{matrix} T_1  & A \\ \mathbb O & T_2 \end{matrix}\right]
\left[ \begin{matrix} I_{\mathcal H_1}   & Z \\ \mathbb O & Y_2 \end{matrix}\right]. 
\end{equation*} 
Set $Y=\left[ \begin{matrix} Y_1  & Z \\ \mathbb O & Y_2 \end{matrix}\right]$. It is easy to see that $Y$ 
satisfies the conclusion of the lemma for $N=2$. 

If $N>2$, assume that the lemma is proved for $N-1$. Write  
\begin{equation*} T= \left[ \begin{matrix} R  & * \\ \mathbb O & T_N
 \end{matrix}\right], 
\end{equation*} 
where $R$ has the form \eqref{triang} for $N-1$. Let $W$ be a transformation obtained 
by applying the inductive hypothesis to $R$. 
Apply the case $N=2$ to the operator $T$ and transformations $W$ and $Y_N$. 
The obtained transformation satisfies the conclusion of the lemma. 
 \end{proof}

\begin{theorem}\label{thmblaschke} Let $N\in\mathbb N$, and let $B_n$, $n=1,\ldots, N$, be  Blaschke products with simple zeros satisfying 
Carleson condition. Set $B=\prod_{n=1}^N B_n$. Let $T$ be an a.c. continuous polynomially bounded operator.
Suppose that $B(T)=\mathbb O$ and $\mu_T<\infty$. Then $T$ is similar to a contraction with finite defects. 
\end{theorem}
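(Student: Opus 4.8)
The plan is to localise the problem along a triangulation of $T$ given by Lemma~\ref{lemthetacc0}, apply Lemma~\ref{lemblaschke} to the diagonal blocks, and reassemble by Lemma~\ref{lemisometry}. Since $B\not\equiv0$ and $B(T)=\mathbb O$, $T$ is of class $C_0$, and its minimal function divides $B$, hence is a Carleson--Newman product. I would start from the fact that an a.c.\ polynomially bounded operator annihilated by a Carleson--Newman product is similar to a contraction (cf.\ the circle of results behind the references quoted in the proof of Lemma~\ref{lemblaschke}); this contraction is then automatically of class $C_0$ with the same minimal function. Replacing $T$ by it, I may assume that $T$ itself is a contraction of class $C_0$ with $B(T)=\mathbb O$ and $\mu_T<\infty$.

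Next I would apply Lemma~\ref{lemthetacc0} to the factorisation $B=B_1\cdots B_N$, obtaining a triangulation of $T$ of the form \eqref{triang} with $B_n(T_n)=\mathbb O$. Each $T_n$ is the compression of the contraction $T$ to a semi-invariant subspace, hence is itself a contraction of class $C_0$ whose minimal function divides $B_n$; in particular that minimal function is a Blaschke product with simple zeros satisfying the Carleson condition. Moreover $\mu_{T_n}\le\mu_T$, since the multiplicity of an operator of class $C_0$ does not increase under restriction to an invariant subspace or compression to a semi-invariant subspace (equivalently, writing $T_n\approx\bigoplus_\lambda\lambda I_{\mathbb C^{k_n(\lambda)}}$ as in the proof of Lemma~\ref{lemblaschke}, one has $k_n(\lambda)=\dim\ker(T_n-\lambda I)\le\dim\ker(T-\lambda I)\le\mu_T$). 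Hence Lemma~\ref{lemblaschke}, applied to $T_n$ with $\mu_{T_n}$ and the minimal function of $T_n$ playing the roles of $M$ and $B$, yields an inner diagonal matrix function $\Theta_n\in H^\infty(\mathbb C^{\mu_{T_n}},\mathbb C^{\mu_{T_n}})$ with $T_n\approx T_{\Theta_n}$.

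Now $T_{\Theta_n}=P_{\mathcal H(\Theta_n)}S_{\mu_{T_n}}|_{\mathcal H(\Theta_n)}$ and $S_{\mu_{T_n}}\Theta_nH^2_{\mu_{T_n}}\subset\Theta_nH^2_{\mu_{T_n}}$, so the orthogonal projection $P_{\mathcal H(\Theta_n)}\colon H^2_{\mu_{T_n}}\to\mathcal H(\Theta_n)$ intertwines $S_{\mu_{T_n}}$ with $T_{\Theta_n}$ and is onto; composing it with an invertible intertwiner realising $T_n\approx T_{\Theta_n}$ gives $Y_n\in\mathcal I(S_{\mu_{T_n}},T_n)$ with $Y_nH^2_{\mu_{T_n}}=\mathcal H_n$. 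Since $T$ is a contraction carrying the triangulation \eqref{triang}, Lemma~\ref{lemisometry} produces $Y\in\mathcal I(S_M,T)$ with $YH^2_M=\mathcal H$, where $M=\sum_{n=1}^N\mu_{T_n}<\infty$. Then $\ker Y\in\operatorname{Lat}S_M$; setting $\mathcal K=H^2_M\ominus\ker Y$ and $R=P_{\mathcal K}S_M|_{\mathcal K}$, the map $Y|_{\mathcal K}$ is an invertible element of $\mathcal I(R,T)$, so $T\approx R$, and $R$ is a contraction of class $C_{\cdot0}$ with $d_{R^*}\le M<\infty$ (being similar to a $C_0$ operator, $R$ is in fact of class $C_0(M')$ with $M'=d_{R^*}$). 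Undoing the first reduction, the original operator is similar to $R$, a contraction with finite defects.

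The routine ingredients are the two invocations of the structural lemmas and the realisation of $T_{\Theta_n}$ as a quotient of $S_{\mu_{T_n}}$. The step requiring genuine care — and, I expect, the main obstacle — is the opening reduction: placing oneself inside the hypothesis of Lemma~\ref{lemisometry}, i.e.\ showing that a polynomially bounded operator annihilated by a Carleson--Newman product is similar to a contraction. The estimate $\mu_{T_n}\le\mu_T$, standard for operators of class $C_0$, also deserves explicit justification, since it is what keeps the matrices $\Theta_n$, and hence the defect $d_{R^*}$, finite.
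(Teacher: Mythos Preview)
Your proposal is correct and follows essentially the same route as the paper's proof: reduce to a contraction (the paper cites \cite{clouatre14}, \cite{gamal17}, \cite{vas} for this step), triangulate via Lemma~\ref{lemthetacc0}, bound $\mu_{T_n}\le\mu_T$ (the paper invokes \cite[Corollaries~III.5.26 and~III.5.27]{berc}), apply Lemma~\ref{lemblaschke} to each block, compose the similarity with the projection $P_{\mathcal H(\Theta_n)}$ to get the surjective intertwiners $Y_n$, feed these into Lemma~\ref{lemisometry}, and pass to $R=P_{\mathcal K}S_M|_{\mathcal K}$ with $\mathcal K=H^2_M\ominus\ker Y$. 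One small caution: your parenthetical alternative for the multiplicity bound, namely $\dim\ker(T_n-\lambda I)\le\dim\ker(T-\lambda I)$, is not immediate for a general triangulation and would need its own argument; your primary justification via the standard $C_0$ result is the safe one, and is exactly what the paper uses.
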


\begin{proof} It is well known (see, for example, {\cite[Theorem 4.4]{clouatre14}} or {\cite[Theorem 3.2]{gamal17}}, 
based on  
\cite{vas}, see also {\cite[Ch. IX]{nik86}}, {\cite[Sec. C.3]{nik02}})
that if $T$ is an a.c. continuous polynomially bounded operator,  
$B$ satisfies   the assumption of the theorem,  and 
$B(T)=\mathbb O$, then $T$ is similar to a contraction. Therefore, 
 we may assume that $T$ is a contraction.  Denote by $\mathcal H$ the space on which $T$ acts. 
Apply Lemma \ref{lemthetacc0}  to $T$ and $B=\prod_{n=1}^N B_n$. We obtain 
   a triangulation of $T$ of the form \eqref{triang},  
where $B_n(T_n)=\mathbb O$,  $n=1,\ldots, N$. By {\cite[Corollaries III.5.26 and III.5.27]{berc}},
$M_n:=\mu_{T_n}\leq\mu_T$ for every $n=1,\ldots, N$. 
Let $B_{1n}$ be the minimal function of $T_n$. Since $B_{1n}$ divides $B_n$, the sequence of zeros of $B_{1n}$ satisfies  the Carleson condition. 
Let $\Theta_n\in H^\infty(\mathbb C^{M_n},\mathbb C^{M_n})$ be a  function from  Lemma \ref{lemblaschke} applied to $T_n$, $n=1,\ldots, N$. 
Then $T_n\approx T_{\Theta_n}$. Denote by $\mathcal H_n$ the space on which $T_n$ acts. 
 There exists an invertible transformation $X_n\in\mathcal I(T_{\Theta_n},T_n)$.
 Set $Y_n=P_{\mathcal H(\Theta_n)}X_n$. 
Then $Y_n\in\mathcal I(S_{M_n},T_n)$ 
 and  $Y_n H^2_{M_n}=\mathcal H_n$. 

Set $M=\sum_{n=1}^N M_n$. Let  $Y$ 
 be a transformation 
 from the conclusion of Lemma \ref{lemisometry} applied to $T$. Then 
 $Y S_M=T Y$ and $Y H^2_M=\mathcal H$. 
Set 
  $\mathcal K=H^2_M\ominus\ker Y$ and $R=P_{\mathcal K} S_M|_{\mathcal K}$. 
Then $R$ is a contraction with finite defects and $Y|_{\mathcal K}$ realizes the similarity $R\approx T$. 
\end{proof}

\end{document}